\newenvironment{newlist}[1]%
{\begin{list}{}{\settowidth{\labelwidth}{\bf #1}%
			\setlength{\leftmargin}{\labelwidth}%
			\addtolength{\leftmargin}{\labelsep}%
			}}%
{\end{list}}
\theoremstyle{thmstyleone}%
\newtheorem{theorem}{Theorem}
\newtheorem{proposition}[theorem]{Proposition}%
\theoremstyle{thmstyletwo}%
\newtheorem{example}{Example}%
\newtheorem{remark}{Remark}%
\newtheorem{lemma}{Lemma}
\theoremstyle{thmstylethree}%
\newtheorem{definition}{Definition}%
\begin{document}

\title{Conditional gradient method for vector optimization}

\author[1]{\fnm{Wang} \sur{Chen}}

\author[2]{\fnm{Xinmin} \sur{Yang}}

\author[3]{\fnm{Yong} \sur{Zhao}}

\affil[1]{\orgdiv{College of Mathematics}, \orgname{Sichuan University}, \orgaddress{\city{Sichuan}, \postcode{610065}, \country{China}}}

\affil[2]{\orgdiv{School of Mathematical Sciences}, \orgname{Chongqing Normal University}, \orgaddress{ \city{Chongqing}, \postcode{401331}, \country{China}}}

\affil[3]{\orgdiv{College of Mathematics and Statistics}, \orgname{Chongqing Jiaotong University}, \orgaddress{\ \city{Chongqing}, \postcode{610101}, \country{China}}}


\abstract{In this paper, we propose a conditional gradient method for solving constrained vector optimization problems with respect to a partial order induced by a closed, convex and pointed cone with nonempty interior. When the partial order under consideration is the one induced by the non-negative orthant, we regain the method for multiobjective optimization recently proposed by Assun\c{c}\~{a}o et al. (Comput Optim Appl 78(3):741--768, 2021). In our method, the construction of auxiliary subproblem is based on the well-known oriented distance function. Three different types of step size strategies (Armijio, adaptative and nonmonotone) are considered. Without any assumptions, we prove that stationarity of accumulation points of the sequences produced by the proposed method equipped with the Armijio or the nonmonotone step size rule. To obtain the convergence result of the method with the adaptative step size strategy, we introduce an useful cone convexity condition which allows to circumvent the intricate question of the Lipschitz continuity of Jocabian for the objective function. This condition helps us generalize the classical descent lemma to the vector optimization case. Under suitable convexity assumptions for the objective function, it is proved that all accumulation points of any generated sequences obtained by our method are weakly efficient solutions.}

\keywords{Vector optimization, Conditional gradient method, Stationary point, Convergence, \emph{C}-convexity}



\maketitle

\section{Introduction}\label{sec1}

In vector optimization, one considers a vector-valued function $F:\mathbb{R}^{n}\rightarrow\mathbb{R}^{m}$, and the partial order defined by a closed, convex and pointed cone $C\subseteq \mathbb{R}^{m}$ with nonempty interior, denoted by ${\rm int}(C)$. The partial order $\preceq_{C}$ $(\prec_{C})$ induced by $C$ in $\mathbb{R}^{m}$ is given by $y_{1}\preceq_{C} y_{2}$ ($y_{1}\prec_{C} y_{2}$) if and only if $y_{2}-y_{1}\in C$ ($y_{2}-y_{1}\in{\rm int}(C)$). We are interested in the following vector optimization problem:
\begin{equation}\label{mop}
	\begin{aligned}
		\text{min}_{C}&\quad F(x)=(f_{1}(x),f_{2}(x),...,f_{m}(x))^{\top}\\
		\text{s.t.}&\quad x\in \Omega,
	\end{aligned}
\end{equation}
where $F:\mathbb{R}^{n}\rightarrow\mathbb{R}^{m}$ is a vector-valued function and $\Omega\subseteq\mathbb{R}^{n}$ is a nonempty set. Note that ``$\min_{C}$'' represents the optimum with respect to the cone $C$ and the superscript ``${\top}$'' denotes the transpose. A point $x\in\Omega$ is called a \emph{weakly efficient} solution of problem (\ref{mop}) if there exists no $x^{*}\in\Omega$ such that $F(x^{*})\prec_{C} F(x)$ (see \cite{L1989,J2011}).

We will refer to \eqref{mop} as a unconstrained vector optimization problem if $\Omega=\mathbb{R}^{n}$. Otherwise, it is a constrained vector optimization problem. When $m=1$ and $\preceq_{C}$ is the usual linear order in $\mathbb{R}$, the problem \eqref{mop} is called the scalar optimization problem. When $C=\mathbb{R}_{+}^{m}$, where $\mathbb{R}_{+}^{m}$ is the non-negative orthant of $\mathbb{R}^{m}$, the problem \eqref{mop} is called the multicriteria or multiobjective optimization problem. Some practical applications of multiobjective optimization can be found in engineering \cite{R_m2013}, finance \cite{Z_m2015}, environment analysis \cite{F_o2001}, management science \cite{T_a2010}, machine learning \cite{J_m2006}, etc. In recent years, many iterative methods for solving scalar optimization problems have been extended to the multiobjective optimization case, such as steepest descent \cite{FS2000}, Newton \cite{FDS2009, WHW2019}, subgradient \cite{da_a2013}, quasi-Newton \cite{QGC2011}, trust region \cite{QGL2013,C_t2016} methods, and so on. It is worth mentioning that even though the vast majority of real-world problems formulated as vector-valued problems copes with the point-wise partial order $\mathbb{R}_{+}^{m}$. However, as reported in \cite{DS2005,DI2004,FD2013}, there are many others that require preference orders induced by closed convex cones rather than the non-negative orthant. Such cones have been recently analyzed in real-life problems, for instance, the issues concerning portfolio selection in security markets \cite{A_e2004,A_g2004}, the vector approximation problem and the cooperative $n$ player differential games \cite{J2011}. Therefore, it is necessary to focus our attention on vector optimization problems.

One of the main solution strategies for vector optimization problems is the so-called scalarization method whose core idea is to convert a target vector optimization problem into a scalar one (see \cite{L1989,G_o2006,J2011,A_v2018}), in such a way that the optimal solution of the new problem is also solution for the original one. Another strategy is the iterative methods, which directly extend the existing scalar optimization and multiobjective optimization methods to the vector optimization case. For example, the proximal point method used in scalar optimization have been successfully extended by \cite{V_a2011} to solve vector optimization problems on nonpolyhedral set. Chen \cite{C_g2011} proposed a class of generalized viscosity methods to solve vector optimization problem with nonsmooth objective function. Bello Cruz \cite{B_a2013} considered an extension of the projected subgradient method to solve convex constrained vector optimization. In \cite{L_n2018,GP2020}, the authors proposed nonlinear conjugate gradient methods for solving unconstrained vector optimization problems. The vector extensions of the Fletcher--Reeves, conjugate descent, Dai--Yuan, Polak--Ribi\`{e}re--Polyak, Hestenes--Stiefel and Hager--Zhang parameters were considered. Gra\~{n}a Drummond et al. \cite{DI2004,FD2013} provided an extension of the projected gradient method to solve constrained vector optimization problems. In addition, on the basis of Fliege and Svaiter's work \cite{FS2000}, Gra\~{n}a Drummond and Svaiter \cite{DS2005} regained the steepest descent method for solving unconstrained vector optimization problems. Compared with \cite{FS2000}, the remarkable feature in \cite{DS2005} is that the subproblem is given with the help of a gauge function, which has a more general form. Note that, when $C=\mathbb{R}_{+}^{m}$,  vector versions of projected gradient and steepest descent methods are consistent with the ones in \cite{FS2000} and \cite{FD2014}, respectively. Following Gra\~{n}a Drummond and Svaiter's \cite{DS2005} research path, vector versions of Newton method \cite{DRS2014,LC2014} have been proposed for solving vector optimization problems. It is worth noting that, as far as we know, Chen et al. \cite{C_a2009, C_a2009a}, Chuong and Yao \cite{CY2012}, Bonnel \cite{BIS2005}, Chuong \cite{C2010,C2013}, Bo\c{t} and Grad \cite{BG2018} also proposed some extension methods for solving vector optimization problems in infinite-dimensional settings. Therefore, the use of extensions of the iterative methods in scalar/multiobjective optimization to vector optimization is currently a promising research.

We should be noted that the most of aforementioned methods for multiobjective/vector optimization problems are equipped with the monotone line search rule. It is well-known that, in scalar optimization, the enforcing monotonicity of function values makes the corresponding methods to converge slower in the minimization process, especially when the iterate is in the bottom of a narrow curved valley \cite{G_a1986}. To overcome the shortcoming caused by the monotone search rule, numerous nonmonotone line methods were proposed and then verified numerically in the literature (see \cite{G_a1986,Z_a2004,G_i2008,A_o2017}, but not limited to them).  
In recent years, the studies on algorithms based on nonmonotone schemes for solving multiobjective optimization problems have attracted more attention.  Mita et al. \cite{M_n2019} extended the max-type \cite{G_a1986} and the average-type \cite{Z_a2004} nonmonotone line searches for scalar optimization to the multiobjective setting and then successfully applied them to the multiobjective versions of steepest descent and Newton methods. In \cite{M_a2020}, the authors considered a quasi-Newton method based on the nonmonotone line search technique of \cite{Z_a2004} for unconstrained multiobjective optimization problem with strongly convex objective function. Zhao et al. \cite{Z_l2021} proposed a projected gradient method equipped with the nonmonotone line search procedure given by Hager and Zhang \cite{Z_a2004} for constrained multiobjective optimization problems. Ramirez and Sottosanto \cite{R_n2022}  presented a trust region algorithm with a nonmonotone technique for solving the unconstrained multiobjective optimization problems. Numerical experiments in \cite{M_a2020,Z_a2004,R_n2022} suggests that the nonmonotone line searches tend to be more efficient than monotone ones. For vector optimization problem, Qu et al. \cite{Q_n2017} utilized the max-type \cite{G_a1986} nonmonotone strategy to give two nonmonotone gradient algorithms and then established both the global and local convergence results for the two algorithms. So far, to the best of our knowledge, there are few researches about extending nonmonotone strategy to vector optimization where the partial order is a general cone. Therefore, it is necessary for us to study other types of nonmonotone techniques in vector optimization.

Very recently, Assun\c{c}\~{a}o et al. \cite{AFP2021} introduced a multiobjective version of the classical conditional gradient (also known as Frank-Wolfe) method presented in \cite[pp.378]{B2017} for constrained multiobjective optimization problems. Convergence analysis and iteration-complexity bounds of the method were detailedly studied under suitable assumptions. Their numerical experiments illustrate the effectiveness of the method. It is noteworthy that, in the conclusion part of \cite{AFP2021}, Assun\c{c}\~{a}o et al. put forward an interesting question:

\emph{``A natural question is whether it is possible to analyze the conditional gradient method for vector optimization problems, i.e., when the partial order in $\mathbb{R}^{m}$ is induced by other underlying cones instead of the non-negative orthant.''}

The purpose of this paper is to take a step further on the direction of \cite{AFP2021} and give a pertinent solution to the above question. Based on the research line of literature \cite{DS2005,DRS2014,LC2014,DI2004,FD2013,CY2012,C2013}, to solve problem \eqref{mop}, we directly and successfully extend the conditional gradient method proposed in \cite{AFP2021}. At each iteration of our method, the descent direction is the difference between the current iterate and a optimal solution of a auxiliary subproblem. We should mention that the subproblem in our method is based on the well-known oriented distance function \cite{JBH1979,AZ2003}. When the partial order cone is the nonnegative orthant and the norm is the infinite norm, our subproblem's form turns out to be the very same proposed by \cite{AFP2021} for the multiobjective case. Our method is considered with three different step size strategies. Firstly, a Armijo line search rule is conducted to choose the step size at every iteration and ensures that the sequence of objective function values decreases in the partial order induced by cone $C$. The convergence property that requires no any conditions on the objective function is established. Secondly, we analyze the proposed method with a adaptative step size rule. It is to emphasize that, in convergence analysis, we circumvent the intricate question of Lipschitz continuity of Jacobian for objective function by using an elegant and easy to check $C$-convexity condition. The useful condition is inspired by the work of Bauschke et al. \cite{BBT2017}, and as far as we know, there is no research about extending the condition to vector optimization. Finally, based on the nonmonotone line search given by Gu and Mo \cite{G_i2008} in the scalar context, we introduce a new nonmonotone technique in vector optimization, and then prove that the line search also guarantees the convergence of our method. For convex objective functions with respect to the partial order $C$, we show all accumulation points of any generated sequences are weakly efficient solutions.

The outline of this paper is as follows. Section 2 presents some basic definitions, notations and auxiliary results which will be used in the sequel. In Section 3, we give the general scheme for conditional gradient method for constrained vector optimization problems. In Section 4, we analyze the convergence of the proposed method with three different step size strategies. Finally, in Section 5, some conclusions and remarks are presented.

\section{Preliminaries}
For a nonempty set $X\subseteq \mathbb{R}^{m}$, the boundary of $X$ is denoted by ${\rm bd}(X)$. Let $I=\{1,2,\ldots,m\}$. For a compact set $X$, the diameter of $X$ is given by ${\rm diam}(X)=\mathop{\max}_{x,y\in X}\|x-y\|$, where $\|\cdot\|$ denotes the norm in $\mathbb{R}^{m}$. 

We now recall the concept of oriented distance function (also called assigned distance function or Hiriart-Urruty function), which was proposed by Hiriart-Urruty \cite{JBH1979} to investigate optimality conditions of nonsmooth optimization problems from the geometric point of view. The oriented distance function has been extensively used in several works, such as scalarization for vector optimization \cite{AZ2003, MMR2005,L_m2009}, robustness for multiobjective optimization \cite{A_c2019}, optimality conditions for vector optimization  \cite{GHY2012}, optimality conditions for set-valued optimization \cite{ZCY2019}, and so on. Herein, we consider the  oriented distance function in $\mathbb{R}^{m}$.

\begin{definition}\label{dis_def}
	\cite{JBH1979,AZ2003}
	Let $A$ be a subset of $\mathbb{R}^{m}$. The function $\Delta_{A}:\mathbb{R}^{m}\rightarrow\mathbb{R}\cup\{\pm\infty\}$, defined by
	\begin{equation}\label{dis_fun}
		\Delta_{A}(y)=d_{A}(y)-d_{\mathbb{R}^{m}\backslash A}(y),\quad\forall y\in \mathbb{R}^{m},
	\end{equation}
	is called the oriented distance function, where $d_{A}(y):=\inf\{\|y-a\|:a\in A\}$ stands for the distance function from $y\in \mathbb{R}^{m}$ to the set $A$.
\end{definition}

We give the following example to illustrate the function $\Delta_{A}$.

\begin{example}\label{ex1}
	\begin{enumerate}[(i)]
		\item If we consider the norm $\|y\|_{2}=(\sum_{i=1}^{m}y_{i}^{2})^{1/2}$ in $\mathbb{R}^{m}$ and $A=\{y\in\mathbb{R}^{m}:\|y\|_{2}\leq1\}$, then $\triangle_{A}(y)=\|y\|_{2}-1$.
		\item If we take the norm $\|y\|_{\infty}=\max_{i\in I}\lvert y_{i}\rvert$ and $A=-\mathbb{R}_{+}^{m}$, then
		$\triangle_{A}(y)=\max_{i\in I}y_{i}$.
		\item Consider the norm $\|y\|_{2}$ in $\mathbb{R}^{2}$ and the partial order $C=\{y=(y_{1},y_{2})\in\mathbb{R}^{2}:0\leq y_{1}+y_{2},0\leq y_{2}\}$.  Let $A=-C$ and
		\begin{equation*}
			\begin{aligned}
				B_{1}&=\{y=(y_{1},y_{2})\in\mathbb{R}^{2}:y_{1}\leq 0,y_{2}> 0\},\\
				B_{2}&=\{y=(y_{1},y_{2})\in\mathbb{R}^{2}:y_{1}-y_{2}\leq 0,y_{1}> 0\},\\
				B_{3}&=\{y=(y_{1},y_{2})\in\mathbb{R}^{2}:y_{1}-y_{2}> 0,y_{1}+y_{2}>0\},\\
				B_{4}&=\{y=(y_{1},y_{2})\in\mathbb{R}^{2}:y_{1}-y_{2}>0,y_{1}+y_{2}\leq0\},\\
				B_{5}&=\{y=(y_{1},y_{2})\in\mathbb{R}^{2}:y_{1}-y_{2}\leq0,y_{2}\leq 0\}.\\
			\end{aligned}
		\end{equation*}
		Clearly, $A=B_{4}\cup B_{5}$ and $Y\backslash A=B_{1}\cup B_{2}\cup B_{3}$. By a direct calculation, we have
		\begin{equation*}
			d_{A}(y)=\left\{
			\begin{array}{lll}
				y_{2}, &\quad{\rm if}\quad y\in B_{1},\\
				\|y\|_{2}, &\quad{\rm if}\quad y\in B_{2},\\
				\lvert y_{1}+y_{2}\rvert/\sqrt{2},&\quad{\rm if}\quad y\in B_{3}
			\end{array}
			\right.
		\end{equation*}
		and
		\begin{equation*}
			d_{\mathbb{R}^{2}\backslash A}(y)=
			\left\{\begin{array}{lll}
				\lvert y_{1}+y_{2}\rvert/\sqrt{2}, &\quad{\rm if}\quad y\in B_{4},\\
				\lvert y_{2}\rvert, &\quad{\rm if}\quad y\in B_{5}.
			\end{array}\right.
		\end{equation*}
	\end{enumerate}
\end{example}

As pointed out by Ansari et al. \cite[Remark 2.1]{A_c2019}, the oriented distance function can easily be implemented by using the fast marching method, fast sweeping method and level set methods (see \cite{Z_a2005}). In this paper, for our purposes, let $A=-C$ in Definition \ref{dis_def}. For the sake of convenience, in what follows, we will use 
\begin{equation}\label{varphi}
	\varphi_{_{C}}(y)=\triangle_{-C}(y),\quad \forall y\in \mathbb{R}^{m}.
\end{equation}
According to \cite[Proposition 3.2]{AZ2003} and the fact that $C$ is a closed, convex and pointed cone with nonempty interior, we immediately have the following properties related to $\varphi_{_{C}}$, which will be used in our subsequent analysis.

\begin{lemma}{\rm\cite{AZ2003}}\label{varphi_property}
	Let $\varphi_{_{C}}(\cdot)$ be defined in (\ref{varphi}). Then, the following statements hold:
	\begin{enumerate}
		\item $\varphi_{_{C}}$ is a $1$-Lipschitzian function;
		\item $\varphi_{_{C}}(y)<0$ for any $y\in-{\rm int}(C)$, $\varphi_{_{C}}(y)=0$ for any $y\in{\rm bd}(-C)$, and $\varphi_{_{C}}(y)>0$ for any $y\in{\rm int}(\mathbb{R}^{m}\backslash(-C))$; \label{varphi_property_ii}
		\item $\varphi_{_{C}}$ is convex;
		\item $\varphi_{_{C}}$ is positively homogeneous;
		\item For all $y_{1},y_{2}\in\mathbb{R}^{m}$, $\varphi_{_{C}}(y_{1}+y_{2})\leq\varphi_{_{C}}(y_{1})+\varphi_{_{C}}(y_{2})$ and
		$\varphi_{_{C}}(y_{1})-\varphi_{_{C}}(y_{2})\leq\varphi_{_{C}}(y_{1}-y_{2})$;
		\item Let $y_{1},y_{2}\in\mathbb{R}^{m}$. If $y_{1}\preceq_{C} (\prec_{C}) y_{2}$, then $\varphi_{_{C}}(y_{1})\leq (<) \varphi_{_{C}}(y_{2})$.
	\end{enumerate}
\end{lemma}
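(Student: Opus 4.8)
The statement collects the familiar properties of the oriented distance function attached to the closed convex cone $-C$, and I would arrange the proof so that the only genuinely analytic work sits in items (i), (ii), (iii), while (iv)--(vi) drop out formally. The plan starts with a preliminary reduction: $-C$ is again a closed, convex cone, it is proper in the sense that $\emptyset\neq -C\neq\mathbb{R}^{m}$ (the second inequality uses that $C$ is pointed and has nonempty interior), and $\mathbb{R}^{m}\setminus(-C)$ is open, nonempty and stable under multiplication by positive scalars. Because $-C$ is closed, the piecewise description recorded just after Definition~\ref{dis_def} specializes to $\varphi_{_{C}}(y)=d_{-C}(y)$ when $y\notin -C$ and $\varphi_{_{C}}(y)=-d_{\mathbb{R}^{m}\setminus(-C)}(y)$ when $y\in -C$; this identity is the workhorse for items (i) and (ii).

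For (i), real-valuedness is immediate since $-C$ and its complement are both nonempty, so the two distance functions are finite everywhere. For the $1$-Lipschitz estimate I would use that $d_{-C}$ and $d_{\mathbb{R}^{m}\setminus(-C)}$ are each $1$-Lipschitz and that their product vanishes identically: when $y_{1},y_{2}$ lie on the same side of $-C$ the bound reduces to Lipschitzness of a single distance function, and when $y_{1}\in -C$, $y_{2}\notin -C$ one takes the last point $z$ of the segment $[y_{1},y_{2}]$ lying in $-C$ (it exists by closedness) and estimates $d_{-C}(y_{2})+d_{\mathbb{R}^{m}\setminus(-C)}(y_{1})\le\|y_{2}-z\|+\|z-y_{1}\|=\|y_{1}-y_{2}\|$. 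For (ii), I would argue case by case from the piecewise formula: if $y\in-{\rm int}(C)={\rm int}(-C)$ then a ball around $y$ sits in $-C$, hence $d_{\mathbb{R}^{m}\setminus(-C)}(y)>0$ and $\varphi_{_{C}}(y)<0$; if $y\in{\rm bd}(-C)$ then $y$ belongs to the closure of both $-C$ and its complement, so both distances vanish and $\varphi_{_{C}}(y)=0$; and if $y\in{\rm int}(\mathbb{R}^{m}\setminus(-C))=\mathbb{R}^{m}\setminus(-C)$ then, $-C$ being closed, $d_{-C}(y)>0$ and $\varphi_{_{C}}(y)>0$.

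The convexity in (iii) is the one point I expect to be the real obstacle: it is precisely the fact inherited from \cite[Proposition~3.2]{AZ2003}, and it genuinely uses convexity of $C$ rather than being a one-line check. The route I would take is the dual representation of the oriented distance function of a proper closed convex set, which for the cone $-C$ reads
\begin{equation*}
	\varphi_{_{C}}(y)=\Delta_{-C}(y)=\sup\{\langle\xi,y\rangle:\ \|\xi\|_{*}=1,\ \xi\in C^{*}\},
\end{equation*}
where $\|\cdot\|_{*}$ is the dual norm and $C^{*}$ is the dual cone of $C$ (this uses that the support function of the cone $-C$ takes only the values $0$, on $C^{*}$, and $+\infty$). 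As a supremum of linear functionals of $y$, $\varphi_{_{C}}$ is then convex. The technical content is establishing this representation — equivalently, checking that the right-hand side equals $d_{-C}(y)$ off $-C$ and $-d_{\mathbb{R}^{m}\setminus(-C)}(y)$ on $-C$; alternatively, one can prove convexity directly by combining the convexity of $d_{-C}$ with the concavity on $-C$ of the in-radius function $y\mapsto d_{\mathbb{R}^{m}\setminus(-C)}(y)$ and checking consistency across ${\rm bd}(-C)$, but the cleanest option is simply to cite \cite{AZ2003}, as the statement does.

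Finally, (iv)--(vi) follow formally. Positive homogeneity comes from the substitution $a\mapsto\lambda a$ in the infima defining $d_{-C}(\lambda y)$ and $d_{\mathbb{R}^{m}\setminus(-C)}(\lambda y)$ for $\lambda>0$, both sets being cones, together with $\varphi_{_{C}}(0)=0$ from (ii). Given (iii) and (iv), $\varphi_{_{C}}$ is subadditive, since $\varphi_{_{C}}(y_{1}+y_{2})=2\varphi_{_{C}}\bigl(\tfrac{1}{2}(y_{1}+y_{2})\bigr)\le\varphi_{_{C}}(y_{1})+\varphi_{_{C}}(y_{2})$, and the second inequality in (v) is this applied to $y_{1}=(y_{1}-y_{2})+y_{2}$ and rearranged. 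For (vi), if $y_{1}\preceq y_{2}$ then $y_{1}-y_{2}\in -C$, so $\varphi_{_{C}}(y_{1}-y_{2})\le0$ by (ii), whence $\varphi_{_{C}}(y_{1})-\varphi_{_{C}}(y_{2})\le\varphi_{_{C}}(y_{1}-y_{2})\le0$ by (v); and if $y_{1}\prec y_{2}$ then $y_{1}-y_{2}\in-{\rm int}(C)$, so $\varphi_{_{C}}(y_{1}-y_{2})<0$ and the inequality becomes strict.
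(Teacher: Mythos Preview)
Your argument is correct in every item, but there is nothing to compare it against: the paper does not prove this lemma at all. It is stated as a quotation of \cite[Proposition~3.2]{AZ2003}, prefaced only by the sentence ``According to [Proposition 3.2]{AZ2003} and the fact that $C$ is a closed, convex and pointed cone with nonempty interior, we have immediately the following properties,'' and no proof environment follows. So what you have written is not an alternative to the paper's proof but a self-contained expansion of the cited result.

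Given that, your organisation is sensible and the individual arguments are sound: the segment-intersection trick for the Lipschitz bound in (i), the piecewise sign analysis in (ii), the dual representation $\varphi_{_{C}}(y)=\sup\{\langle\xi,y\rangle:\|\xi\|_{*}=1,\ \xi\in C^{*}\}$ for (iii), and the formal deductions of (iv)--(vi) from (ii)--(iii) are all standard and correct. The one place to be slightly careful is (iii): the dual formula you quote is exactly Zaffaroni's, and establishing it from scratch is essentially the content of the cited proposition, so your own remark that ``the cleanest option is simply to cite \cite{AZ2003}'' matches what the paper in fact does.
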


From now on, we assume that $F$ is a continuously differentiable function and the constraint set $\Omega\subseteq\mathbb{R}^{n}$ is nonempty, compact and convex. Given $x=(x_{1},x_{2},\ldots,x_{n})\in\mathbb{R}^{n}$, the Jacobian of $F$ at $x$, denoted by $JF(x)$, is a matrix of order $m\times n$ whose entries are defined by
$$(JF(x))_{i,j}=\frac{\partial f_{i}}{\partial x_{j}}(x),$$ 
where $i\in I$ and $j\in\{1,2,\ldots,n\}$. We may represent it by
$$JF(x)=[\nabla f_{1}(x)\:\nabla f_{2}(x)\:\ldots\:\nabla f_{m}(x)]^{\top},\quad x\in\mathbb{R}^{n}.$$

A necessary, but not sufficient, first-order optimality condition for problem (\ref{mop}) at $x\in\Omega$, is
\begin{equation}\label{fir_ord_opt}
	JF(x)(\Omega-x)\cap(-{\rm int}(C))=\emptyset,
\end{equation}
where $JF(x)(\Omega-x)=\{JF(x)(s-x):s\in\Omega\}$ and
$$JF(x)(s-x)=(\langle\nabla f_{1}(x),s-x\rangle,\langle\nabla f_{2}(x),s-x\rangle,\ldots,\langle\nabla f_{m}(x),s-x\rangle)^{\top}.$$
Obviously, (\ref{fir_ord_opt}) is equivalent to
$JF(x)(s-x)\notin-{\rm int}(C)$ for any $s\in\Omega$.

\begin{definition}\normalfont\label{pare_sta}
	A point $\hat{x}\in\Omega$ satisfying (\ref{fir_ord_opt}) is called a stationary point of problem (\ref{mop}).
\end{definition}

\begin{remark}\normalfont\label{non_stationary_point}
	From Lemma \ref{varphi_property}(ii), we can obtain a equivalent characterization of a stationary point $\hat{x}$ of problem (\ref{mop}), i.e., $$\varphi_{_{C}}(JF(\hat{x})(s-\hat{x}))\geq0,\quad\forall s\in\Omega.$$
\end{remark}

\begin{remark}\normalfont
	\begin{enumerate}
		\item If $m=1$ and $C=\mathbb{R}^{1}_{+}$, then we retrieve the classical stationary condition for constrained scalar optimization problem, i.e., $\langle\nabla f_{1}(\hat{x}),s-\hat{x}\rangle\geq0$ for all $s\in\Omega$.
		\item If we consider the norm $\|\cdot\|_{\infty}$ and $C=\mathbb{R}_{+}^{m}$, then by Remark \ref{ex1} and Example \ref{ex1}(ii), Definition \ref{pare_sta} is the same as the notion presented in \cite[pp.744]{AFP2021}.
	\end{enumerate}
\end{remark}

\begin{remark}\normalfont\label{rem3.3}
	Note that if $\hat{x}\in\Omega$ is not a stationary point of problem (\ref{mop}), then there exists $\hat{s}\in\Omega$ such that $JF(\hat{x})(\hat{s}-\hat{x})\in-{\rm int}(C)$, i.e., $\varphi_{_{C}}(JF(\hat{x})(\hat{s}-\hat{x}))<0$ from Lemma \ref{varphi_property}(ii). In this case, as analyzed in \cite[pp.665]{DRS2014}, we can assert that $\hat{s}-\hat{x}$ is a descent direction for $F$.
\end{remark}

We now recall the concept of $C$-convexity of $F$, which is presented in \cite[Definition 2.4]{J2011}. The objective function $F$ is called $C$-convex on $\Omega$ if 
$$F(\lambda x+(1-\lambda)y)\preceq_{C} \lambda F(x)+(1-\lambda)F(y),$$
for all $x,y\in\Omega$ and all $\lambda\in[0,1]$.  Since $F$ is continuously differentiable, a equivalent characterization of $C$-convexity of $F$ is
$$JF(y)(x-y)\preceq_{C} F(x)-F(y)$$
for all $x,y\in\Omega$ (see \cite[Theorem 2.20]{J2011}).

We conclude this section by giving the relationship between stationary point and weakly efficient solution. The proof of this property can be similarly analyzed from \cite[pp.410]{DS2005} and we omit the process here.

\begin{theorem}\label{stationary_weakly_pareto}
	\begin{enumerate}
		\item If $\hat{x}\in\Omega$ is a weakly efficient solution of problem (\ref{mop}), then $\hat{x}\in\Omega$ is a stationary point.
		\item If $F$ is $C$-convex on $\Omega$ and $\hat{x}\in\Omega$ is a stationary point of problem (\ref{mop}), then $\hat{x}$ is a weakly efficient solution.
	\end{enumerate}
\end{theorem}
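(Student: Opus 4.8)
The plan is to prove both items by contraposition, using the oriented-distance characterization of stationarity in Remark~\ref{non_stationary_point} and the descent-direction observation in Remark~\ref{rem3.3}; essentially the same two ingredients underlie the argument of \cite[pp.~410]{DS2005} that the theorem refers to.

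For item~(i), I would suppose that $\hat{x}\in\Omega$ is \emph{not} stationary. Then by Remark~\ref{rem3.3} there is $\hat{s}\in\Omega$ with $v:=JF(\hat{x})(\hat{s}-\hat{x})\in-{\rm int}(C)$. Using convexity of $\Omega$, the points $x(t):=\hat{x}+t(\hat{s}-\hat{x})$ lie in $\Omega$ for every $t\in(0,1]$, and continuous differentiability of $F$ gives $\tfrac{1}{t}\bigl(F(x(t))-F(\hat{x})\bigr)\to v$ as $t\to0^{+}$. Since $-{\rm int}(C)$ is open, there is $\bar{t}\in(0,1]$ with $\tfrac{1}{t}\bigl(F(x(t))-F(\hat{x})\bigr)\in-{\rm int}(C)$ for all $t\in(0,\bar{t}]$, and since $-{\rm int}(C)$ is a cone, multiplying by $t>0$ yields $F(x(t))-F(\hat{x})\in-{\rm int}(C)$, i.e. $F(x(t))\prec F(\hat{x})$; hence $\hat{x}$ is not weakly efficient. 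Equivalently, one can run this through $\varphi_{_{C}}$: by Lemma~\ref{varphi_property}(i),(ii) the quantity $\varphi_{_{C}}\bigl(\tfrac{1}{t}(F(x(t))-F(\hat{x}))\bigr)$ tends to $\varphi_{_{C}}(v)<0$, so it is negative for small $t$, and positive homogeneity (Lemma~\ref{varphi_property}(iv)) then forces $\varphi_{_{C}}(F(x(t))-F(\hat{x}))<0$.

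For item~(ii), I would assume $F$ is $C$-convex on $\Omega$ and that $\hat{x}\in\Omega$ is \emph{not} weakly efficient, and deduce that $\hat{x}$ is not stationary. By definition there is $x^{*}\in\Omega$ with $F(x^{*})\prec F(\hat{x})$, so $\varphi_{_{C}}(F(x^{*})-F(\hat{x}))<0$ by Lemma~\ref{varphi_property}(ii). Since $F$ is $C$-convex and continuously differentiable, Lemma~\ref{C-convexity} yields $JF(\hat{x})(x^{*}-\hat{x})\preceq F(x^{*})-F(\hat{x})$, whence the order-monotonicity of $\varphi_{_{C}}$ (Lemma~\ref{varphi_property}(vi)) gives
\[
\varphi_{_{C}}\bigl(JF(\hat{x})(x^{*}-\hat{x})\bigr)\le\varphi_{_{C}}\bigl(F(x^{*})-F(\hat{x})\bigr)<0 .
\]
By Remark~\ref{non_stationary_point} this contradicts the stationarity of $\hat{x}$, proving the contrapositive. (Alternatively, Lemma~\ref{C-convexity} gives $JF(\hat{x})(x^{*}-\hat{x})\in\bigl(F(x^{*})-F(\hat{x})\bigr)-C\subset-{\rm int}(C)-C\subset-{\rm int}(C)$, using that ${\rm int}(C)+C\subset{\rm int}(C)$, which directly violates \eqref{fir_ord_opt}.)

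I expect the argument to be essentially routine; the only step that needs a little care is the first-order passage to the limit in~(i), where one must ensure that membership in the \emph{open} cone $-{\rm int}(C)$ — equivalently, the strict negativity of $\varphi_{_{C}}$ — survives along the vanishing difference quotient. The rest is a direct bookkeeping exercise with Lemma~\ref{varphi_property}, Lemma~\ref{C-convexity}, and the definitions of weak efficiency and stationarity.
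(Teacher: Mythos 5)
Your proof is correct, and it is essentially the standard argument that the paper itself defers to (the reference to \cite[pp.~410]{DS2005}): item~(i) via the first-order difference-quotient/openness-of-$-{\rm int}(C)$ argument showing a non-stationary point admits a strictly dominating feasible point, and item~(ii) via Lemma~\ref{C-convexity} together with the monotonicity of $\varphi_{_{C}}$ (or, equivalently, ${\rm int}(C)+C\subset{\rm int}(C)$). Since the paper omits the proof, your write-up fills in exactly the intended argument; no gaps.
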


\section{Conditional gradient method for vector optimization}
In this section, we propose a general scheme of conditional gradient (for short, CondG) method for vector optimization problem \eqref{mop}. 

For a given $x\in\Omega$, we introduce a useful auxiliary function $\psi_{x}:\Omega\rightarrow\mathbb{R}$ defined by
\begin{equation}\label{psix}
	\psi_{x}(s)=\varphi_{_{C}}(JF(x)(s-x)),\quad s\in\Omega.
\end{equation}

For $x\in\Omega$, in order to obtain the descent direction for $F$ at $x$, we need to consider the following auxiliary scalar optimization problem:
\begin{equation}\label{sca_pro1}
	\min_{s\in\Omega}\;\psi_{x}(s).
\end{equation}
It follows from Lemma \ref{varphi_property}(iii) that $\psi_{x}$ defined in (\ref{psix}) is a convex function. This, combined with the fact that $\Omega$ is a nonempty, compact and convex set, gives that problem (\ref{sca_pro1}) admits an optimal solution (possibly not unique) on $\Omega$. We denote the optimal solution of problem (\ref{sca_pro1}) by $s(x)$, i.e.,
\begin{equation}\label{opt_sol}
	s(x)\in\mathop{\rm argmin}_{s\in\Omega}\;\psi_{x}(s).
\end{equation}
and the optimal value of problem (\ref{sca_pro1}) is denoted by $v(x)$, i.e.,
\begin{equation}\label{opt_val}
	v(x)=\psi_{x}(s(x)).
\end{equation}

For multiobjective optimization, where $C=\mathbb{R}_{+}^{m}$, if we consider the norm $\|\cdot\|_{\infty}$, then $s(x)$ can be computed by solving
\begin{equation}\label{sca_pro11}
	\min_{s\in\Omega}\max_{i\in I}\;\langle\nabla f_{i}(x),s-x\rangle,
\end{equation}
which was also considered in \cite{AFP2021}.

According to Remark \ref{rem3.3}, we formally give the search direction for the objective function $F$ at $x$.

\begin{definition}\normalfont
	For any given point $x\in\Omega$, the search direction of the conditional gradient method for $F$ at $x$ is defined as
	\begin{equation}\label{search_direction}
		d(x)=s(x)-x,
	\end{equation}
	where $s(x)$ is given by (\ref{opt_sol}).
\end{definition}

The following property gives a characterization of stationarity in terms of  $v(\cdot)$, which is crucial for convergence analysis and the stopping criteria of our algorithm.

\begin{proposition}\label{pa_sta_equ}
	Let $v:\Omega\rightarrow\mathbb{R}$ be defined in (\ref{opt_val}). Then, the following statements hold:
	\begin{enumerate}
		\item $v(x)\leq0$ for all $x\in\Omega$;
		\item  $x\in\Omega$ is a stationary point of problem (\ref{mop}) if and only if $v(x)=0$.
	\end{enumerate}
\end{proposition}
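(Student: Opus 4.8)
The plan is to prove both parts directly from the definition of $v$ together with the properties of $\varphi_{_C}$ collected in Lemma~\ref{varphi_property} and the stationarity characterization in Remark~\ref{non_stationary_point}.

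For part~(i), the key observation is that the feasible point $s=x$ is always available in problem~(\ref{sca_pro1}), since $x\in\Omega$. Plugging $s=x$ into $\psi_x$ gives $\psi_x(x)=\varphi_{_C}(JF(x)(x-x))=\varphi_{_C}(0)$. By Lemma~\ref{varphi_property}(ii) (or, equivalently, by positive homogeneity in Lemma~\ref{varphi_property}(iv)), $\varphi_{_C}(0)=0$, because $0\in{\rm bd}(-C)$ as $C$ is a pointed cone. Hence $v(x)=\min_{s\in\Omega}\psi_x(s)\le \psi_x(x)=0$. This is the short, routine direction.

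For part~(ii), I would argue by the equivalence in Remark~\ref{non_stationary_point}: $\hat x$ is stationary if and only if $\psi_{\hat x}(s)=\varphi_{_C}(JF(\hat x)(s-\hat x))\ge 0$ for all $s\in\Omega$. If $\hat x$ is stationary, then $\psi_{\hat x}(s)\ge 0$ for all $s\in\Omega$, so $v(\hat x)=\min_{s\in\Omega}\psi_{\hat x}(s)\ge 0$; combined with part~(i) this forces $v(\hat x)=0$. Conversely, if $v(\hat x)=0$, then by definition $\psi_{\hat x}(s)\ge v(\hat x)=0$ for every $s\in\Omega$, which is exactly the stationarity condition in Remark~\ref{non_stationary_point}. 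So the two implications are essentially immediate once part~(i) is in hand and the characterization of stationarity via $\varphi_{_C}$ is invoked.

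There is no real obstacle here; the only point requiring a moment of care is the evaluation $\varphi_{_C}(0)=0$, which must be justified from the cone being closed, convex and pointed (so that $0\in{\rm bd}(-C)$ and Lemma~\ref{varphi_property}(ii) applies), rather than taken for granted. Everything else is a direct consequence of the minimum being attained (guaranteed by convexity of $\psi_x$ and compactness of $\Omega$, as already noted in the text) and of the reformulation of stationarity in terms of $\varphi_{_C}$.
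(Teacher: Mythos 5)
Your proposal is correct and follows essentially the same route as the paper: part~(i) by evaluating $\psi_x$ at $s=x$ and using $\varphi_{_{C}}(0)=0$ from Lemma~\ref{varphi_property}(ii), and part~(ii) by combining the characterization of stationarity via $\varphi_{_{C}}$ in Remark~\ref{non_stationary_point} with the fact that $v(x)$ is the minimum of $\psi_x$ over $\Omega$, together with part~(i). No gaps; the argument matches the paper's proof in both structure and key steps.
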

\begin{proof}
	(i) Since $x\in\Omega$, by (\ref{opt_sol}) and (\ref{opt_val}) and observing that $\varphi_{_{C}}(0)=0$, we have $$v(x)=\min_{s\in\Omega}\psi_{x}(s)\leq\psi_{x}(x)=\varphi_{_{C}}(JF(x)(x-x))=\varphi_{_{C}}(0).$$
	
	(ii) Necessity. Suppose that $x\in\Omega$ is a stationary point of problem (\ref{mop}). Then, it follows from Remark \ref{non_stationary_point} that $\varphi_{_{C}}(JF(x)(s-x))\geq0$ for any $s\in\Omega$. By (\ref{opt_sol}), we have $s(x)\in\Omega$. Hence, $v(x)=\varphi_{_{C}}(JF(x)(s(x)-x))\geq0$. This, combined with (i), yields that $v(x)=0$.
	
	Sufficiency. Let $v(x)=0$. According to (\ref{opt_val}), we obtain
	$$0=v(x) \leq\psi_{x}(s)=\varphi_{_{C}}(JF(x)(s-x))$$
	for all $s\in\Omega$, which implies that $x$ is a stationary point of problem (\ref{mop}).
\end{proof}

\begin{remark}\normalfont\label{pa_sta_equ1}
	It is obvious from Proposition \ref{pa_sta_equ} that $x$ is not a stationary point of problem (\ref{mop}) if and only if $v(x)<0$.
\end{remark}

\begin{proposition}\label{opt_val_continuous}
	Let $v:\Omega\rightarrow\mathbb{R}$ be defined in (\ref{opt_val}). Then, $v$ is continuous on $\Omega$.
\end{proposition}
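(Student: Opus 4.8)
The plan is to prove continuity of $v$ by deriving a uniform modulus–of–continuity estimate, exploiting that $\varphi_{_{C}}$ is $1$-Lipschitz (Lemma \ref{varphi_property}(i)) and that $\Omega$ is compact. Write $g(x,s):=\varphi_{_{C}}(JF(x)(s-x))$, so that, by (\ref{opt_val}), $v(x)=\min_{s\in\Omega}g(x,s)$. The first step is to bound $|g(x,s)-g(y,s)|$ \emph{uniformly in $s$} for $x,y\in\Omega$. Using the $1$-Lipschitz property of $\varphi_{_{C}}$,
\[
|g(x,s)-g(y,s)|\le\|JF(x)(s-x)-JF(y)(s-y)\|,
\]
and then I would split
\[
JF(x)(s-x)-JF(y)(s-y)=\big(JF(x)-JF(y)\big)(s-x)+JF(y)(y-x),
\]
which yields
\[
|g(x,s)-g(y,s)|\le\|JF(x)-JF(y)\|\,\|s-x\|+\|JF(y)\|\,\|x-y\|,
\]
where $\|\cdot\|$ denotes the operator norm induced by the chosen norm on $\mathbb{R}^{m}$.

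Next I would invoke compactness of $\Omega$. Since $\Omega$ is compact, $\|s-x\|\le{\rm diam}(\Omega)=:D<\infty$ for all $s,x\in\Omega$ (Remark \ref{diam}); and since $F$ is continuously differentiable, the map $x\mapsto JF(x)$ is continuous on the compact set $\Omega$, hence bounded, say $\|JF(x)\|\le M$ for all $x\in\Omega$, and moreover uniformly continuous on $\Omega$. Therefore
\[
|g(x,s)-g(y,s)|\le D\,\|JF(x)-JF(y)\|+M\,\|x-y\|=:\omega(x,y)\qquad\text{for all }s\in\Omega,
\]
and the right-hand side is independent of $s$. To pass from $g$ to $v$, observe that for every $s\in\Omega$ we have $v(x)\le g(x,s)\le g(y,s)+\omega(x,y)$; taking the infimum over $s\in\Omega$ gives $v(x)\le v(y)+\omega(x,y)$, and exchanging the roles of $x$ and $y$ gives $|v(x)-v(y)|\le\omega(x,y)$. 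Letting $y\to x$ in $\Omega$, uniform continuity of $JF$ forces $\|JF(x)-JF(y)\|\to0$ while $\|x-y\|\to0$, so $\omega(x,y)\to0$, and hence $v$ is continuous on $\Omega$ (in fact uniformly continuous).

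I do not expect a serious obstacle. The only mildly delicate point is the algebraic decomposition of $JF(x)(s-x)-JF(y)(s-y)$ into one term controlled by the variation of the Jacobian and one controlled by $\|x-y\|$, combined with using compactness of $\Omega$ to make ${\rm diam}(\Omega)$ and $\|JF(\cdot)\|$ uniformly bounded. An alternative, shorter route would be to note that $g$ is jointly continuous on the compact set $\Omega\times\Omega$ and apply Berge's maximum theorem, but the direct estimate above is self-contained and has the added benefit of yielding uniform continuity.
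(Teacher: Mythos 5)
Your proof is correct. The essential ingredients are the same as in the paper --- the $1$-Lipschitz property of $\varphi_{_{C}}$ from Lemma \ref{varphi_property}(i), the bound $\|s-x\|\leq{\rm diam}(\Omega)$ from Remark \ref{diam}, and the continuity of $x\mapsto JF(x)$ --- but the organization is genuinely different. The paper argues sequentially and asymmetrically: it fixes $x$, takes $x^{k}\rightarrow x$, and proves the two one-sided bounds $\limsup_{k}v(x^{k})\leq v(x)$ (by testing $\psi_{x^{k}}$ at the fixed minimizer $s(x)$) and $v(x)\leq\liminf_{k}v(x^{k})$ (by testing $\psi_{x}$ at the varying minimizers $s(x^{k})$, then splitting the argument of $\varphi_{_{C}}$ via subadditivity and the Lipschitz bound). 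You instead establish a single estimate $|g(x,s)-g(y,s)|\leq D\,\|JF(x)-JF(y)\|+M\,\|x-y\|$ that is uniform in $s$, using the decomposition $JF(x)(s-x)-JF(y)(s-y)=(JF(x)-JF(y))(s-x)+JF(y)(y-x)$, and then pass to the infimum over $s$ on both sides; this is the standard ``infimum of an equi-continuous family'' argument. Your route is slightly cleaner (no need to track the minimizers $s(x^{k})$ at all, and no separate limsup/liminf cases) and it buys more: an explicit modulus $\omega(x,y)$ and hence uniform continuity of $v$ on $\Omega$, whereas the paper's sequential argument only delivers continuity, which is all that is needed for the convergence theorems. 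Both proofs rely on the same compactness facts, so neither is more general in hypotheses; the difference is purely in economy and in the strength of the conclusion.
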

\begin{proof}
	Take $x\in\Omega$ and let $\{x^{k}\}$ be a sequence in $\Omega$ such that $\lim_{k\rightarrow\infty} x^{k}=x$. In order to obtain the continuity of $v$ on $\Omega$, it is sufficient to prove that $\lim_{k\rightarrow\infty}v(x^{k})=v(x)$, i.e., 
	\begin{equation}\label{continuous}
		\mathop{\lim\sup}_{k\rightarrow\infty} v(x^{k})\leq v(x)\leq\mathop{\lim\inf}_{k\rightarrow\infty} v(x^{k}).
	\end{equation}
	
	Since $s(x)\in\Omega$, using (\ref{opt_sol}) and (\ref{opt_val}), we can obtain for all $k$,
	\begin{equation}\label{continous_theorem1}
		v(x^{k})=\varphi_{_{C}}(JF(x^{k})(s(x^{k})-x^{k}))\leq\varphi_{_{C}}(JF(x^{k})(s(x)-x^{k})).
	\end{equation}
	Since $F$ is continuously differentiable and $\varphi_{_{C}}$ is continuous as presented in Lemma \ref{varphi_property}(i), taking $\lim\sup_{k\rightarrow\infty}$ on both sides of inequality in (\ref{continous_theorem1}), we have
	\begin{equation}\label{continous_theorem2}
		\mathop{\lim\sup}_{k\rightarrow\infty} v(x^{k})\leq\varphi_{_{C}}(JF(x)(s(x)-x))=v(x).
	\end{equation}
	
	Let us show that $v(x)\leq\mathop{\lim\inf}_{k\rightarrow\infty} v(x^{k})$. Obviously, we have
	\begin{equation}\label{continous_theorem3}
		\begin{aligned}
			v(x)&=\min\{\psi_{x}(s),s\in\Omega\}\\
			&\leq\psi_{x}(s(x^{k}))\\
			&=\varphi_{_{C}}(JF(x)(s(x^{k})-x))\\
			&=\varphi_{_{C}}(JF(x)(s(x^{k})-x^{k}+x^{k}-x))\\
			&=\varphi_{_{C}}(JF(x)(s(x^{k})-x^{k})+JF(x)(x^{k}-x))\\
			&\leq\varphi_{_{C}}(JF(x)(s(x^{k})-x^{k}))+\varphi_{_{C}}(JF(x)(x^{k}-x)),
		\end{aligned}
	\end{equation}
	where the last inequality follows from Lemma \ref{varphi_property}(v). Taking $\lim\inf_{k\rightarrow\infty}$ in (\ref{continous_theorem3}), we get
	\begin{equation}\label{ifff}
		\begin{aligned}
			v(x)&\leq\mathop{\lim\inf}_{k\rightarrow\infty}\varphi_{_{C}}(JF(x)(s(x^{k})-x^{k}))\\
			&=\mathop{\lim\inf}_{k\rightarrow\infty}(v(x^{k})+\varphi_{_{C}}(JF(x)(s(x^{k})-x^{k}))-\varphi_{_{C}}(JF(x^{k})(s(x^{k})-x^{k})))\\
			&\leq\mathop{\lim\inf}_{k\rightarrow\infty}(v(x^{k})+\|JF(x)(s(x^{k})-x^{k})-JF(x^{k})(s(x^{k})-x^{k})\|)\\
			&=\mathop{\lim\inf}_{k\rightarrow\infty}(v(x^{k})+\|(JF(x)-JF(x^{k}))(s(x^{k})-x^{k})\|)\\
			&\leq\mathop{\lim\inf}_{k\rightarrow\infty}(v(x^{k})+\|JF(x)-JF(x^{k})\|\|s(x^{k})-x^{k}\|),
		\end{aligned}
	\end{equation}
	where the second inequality follows from Lemma \ref{varphi_property}(i). Since $s(x^{k}),x^{k}\in\Omega$, it follows that $\|s(x^{k})-x^{k}\|\leq{\rm diam}(\Omega)<\infty$. This, combined with the continuously differentiability of $F$ and (\ref{ifff}), we get $v(x)\leq\mathop{\lim\inf}_{k\rightarrow\infty} v(x^{k})$.  Altogether, (\ref{continuous}) holds. Consequently, $v$ is continuous on $\Omega$.
\end{proof}

Based on the previous discussions, we are now ready to describe the general scheme of the conditional gradient algorithm for solving problem (\ref{mop}). 
\vskip0.1in
\begin{newlist}{Step1: }
	\item[CondG algorithm.]
	\item[Step 0] Choose $x^{0}\in\Omega$. Compute $s^{0}$ and $v^{0}$ and initialize $k\leftarrow0$.
	\item[Step 1] If $v^{k}=0$, then \textbf{stop}.
	\item[Step 2] Compute $d^{k}=s^{k}-x^{k}$.
	\item[Step 3] Compute a step size $t_{k}\in(0,1]$ by a step size strategy and set $x^{k+1}=x^{k}+t_{k}d^{k}$.
	\item[Step 4] Compute $s^{k+1}$ and $v^{k+1}$, set $k\leftarrow k+1$, and go to \textbf{Step 1}.
\end{newlist}
\vskip0.1in

At $k$-th iteration, we solve problem (\ref{sca_pro1}) with $x=x^{k}$. Let us call $s^{k}=s(x^{k})$ and $v^{k}=v(x^{k})$ the optimal solution and optimal value of problem (\ref{sca_pro1}) at $k$-th iteration, respectively. The descent direction at $k$-th iteration is computed by $d^{k}=d(x^{k})=s^{k}-x^k$. If $v^{k}\neq 0$, then we can use $d^{k}$ with a step size strategy to look for a new solution $x^{k+1}$ which dominates $x^{k}$. 

Observe that the CondG algorithm can terminate with a stationary point in a finite number of iterations or, it produces an infinite sequence of nonstationary points. In convergence analysis, we will assume that $\{x^{k}\}$, $\{s^{k}\}$, $\{d^{k}\}$ and $\{t^{k}\}$ are infinite sequences. Therefore, using Remark \ref{pa_sta_equ1}, we have $v^{k}<0$ for all $k$. A simple and elementary fact that the proposed algorithm generates feasible sequences is given below.

\begin{remark}\normalfont\label{feasible}
	It is noteworthy that the sequence $\{x^{k}\}$ generated by the CondG algorithm contains in $\Omega$. Indeed, since $x^{0}\in\Omega$, $s^{k}\in\Omega$ and $t_{k}\in(0,1]$, it follows from inductive arguments that $x^{k}\in\Omega$ for all $k$.
\end{remark}

\section{Convergence analysis}

It is worth noting that the choice for computing the step size $\alpha_{k}$ at Step 3 is crucial. In this section, we establish the main convergence properties of the CondG algorithm with three different step size strategies.

\subsection{Analysis of CondG algorithm with Armijo step size}

\noindent\textbf{Armijo step size} Let $\beta\in(0,1)$, $\delta\in(0,1)$ and $\tau\in(0,1]$. Choose $t_{k}$ as the largest value in $\{\tau,\delta\tau,\delta^{2}\tau,\ldots\}$ such that
\begin{equation}\label{armijio}
	F(x^{k}+t_{k} d^{k})\preceq_{C} F(x^{k})+\beta t_{k} JF(x^{k})d^{k}.
\end{equation}

Other vector optimization methods, including steepest descent, Newton and projected gradient methods, that use the Armijo rule can be found in \cite{DS2005,LC2014,DI2004,FD2013,CY2012,C2013}. The next proposition allows us to implement the line search \eqref{armijio} at Step 3 of the CondG algorithm.

\begin{proposition}\label{des_lemma0}
	Let $\beta\in(0,1)$ and $s(x)$ be defined in (\ref{opt_sol}) and $JF(x)(s(x)-x)\prec_{C}0$. Then there exists some $\hat{t}\in(0,1]$ such that
	\begin{equation*}
		F(x+t(s(x)-x))\prec_{C} F(x)+t\beta JF(x)(s(x)-x).
	\end{equation*}
	for any $t\in(0,\hat{t}]$.
\end{proposition}
\begin{proof}
	It is analogous to the proof of \cite[Proposition 2.1]{DS2005}.
\end{proof}

We present some properties related to the points which are iterated by the CondG algorithm with the Armijo step size condition \eqref{armijio}.

\begin{proposition}\label{properties}
	For all $k$, we have
	\begin{enumerate}
		\item $F(x^{k+1})\prec_{C} F(x^{k})$;
		\item $\sum_{i=0}^{k}t_{i}\lvert v^{i}\rvert\leq\beta^{-1}(\varphi_{_{C}}(F(x^{0}))-\varphi_{_{C}}(F(x^{k+1})))$.
	\end{enumerate}
\end{proposition}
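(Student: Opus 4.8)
The plan is to prove the three items in order, since each feeds into the next, and to keep the Armijo acceptance inequality \eqref{armijio1} as the central engine.

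For (i), recall that by hypothesis Algorithm \ref{algorithm1} generates only nonstationary points, so $v^k \neq 0$ for every $k$. Combining this with Proposition \ref{pa_sta_equ}(i), which gives $v^k = v(x^k) \leq 0$, we immediately get $v^k < 0$; this is exactly Remark \ref{pa_sta_equ1} applied at $x^k$. So (i) is a one-line consequence of results already established.

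For (ii), I would argue as follows. Since $x^k$ is nonstationary, by (i) we have $v^k = v(x^k) < 0$, and by Remark \ref{non_stationary_point}/Lemma \ref{varphi_property}(\ref{varphi_property_ii}) this forces $JF(x^k)(s^k - x^k) = JF(x^k)d^k \in -{\rm int}(C)$, i.e. $JF(x^k)d^k \prec 0$. Then Lemma \ref{des_lemma} (applied with $x = x^k$) guarantees that the Armijo line search terminates with some $t_k \in (0,1]$ satisfying \eqref{armijio1}, that is, $F(x^{k+1}) = F(x^k + t_k d^k) \preceq F(x^k) + t_k\beta\, JF(x^k)d^k$. Now $t_k > 0$, $\beta > 0$, and $JF(x^k)d^k \in -{\rm int}(C)$, so $t_k\beta\, JF(x^k)d^k \in -{\rm int}(C)$ because ${\rm int}(C)$ is a cone; hence $F(x^k) + t_k\beta\, JF(x^k)d^k \prec F(x^k)$. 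Chaining the $\preceq$ and $\prec$ relations (using that $C + {\rm int}(C) \subseteq {\rm int}(C)$) yields $F(x^{k+1}) \prec F(x^k)$, which is (ii).

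For (iii), I would telescope after applying $\varphi_{_{C}}$ to the Armijo inequality. From \eqref{armijio1} and Lemma \ref{varphi_property}(vi) (monotonicity of $\varphi_{_{C}}$ under $\preceq$) together with Lemma \ref{varphi_property}(v), we get
\begin{equation*}
	\varphi_{_{C}}(F(x^{k+1})) \leq \varphi_{_{C}}(F(x^k) + t_k\beta\, JF(x^k)d^k) \leq \varphi_{_{C}}(F(x^k)) + \varphi_{_{C}}(t_k\beta\, JF(x^k)d^k).
\end{equation*}
By positive homogeneity (Lemma \ref{varphi_property}(iv)), $\varphi_{_{C}}(t_k\beta\, JF(x^k)d^k) = t_k\beta\,\varphi_{_{C}}(JF(x^k)d^k) = t_k\beta\, \psi_{x^k}(s^k) = t_k\beta\, v^k$, so $\varphi_{_{C}}(F(x^{k+1})) \leq \varphi_{_{C}}(F(x^k)) + t_k\beta\, v^k$. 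Rearranging gives $t_k\beta(-v^k) \leq \varphi_{_{C}}(F(x^k)) - \varphi_{_{C}}(F(x^{k+1}))$, and since $v^k < 0$ we have $-v^k = |v^k|$. Summing over $k = 0,1,\dots,k$ makes the right-hand side telescope to $\varphi_{_{C}}(F(x^0)) - \varphi_{_{C}}(F(x^{k+1}))$, and dividing by $\beta$ yields (iii).

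The only subtle point — and the place I would be most careful — is the cone arithmetic in (ii): justifying that $\preceq$ followed by $\prec$ yields $\prec$, i.e. that $y_1 \in C$ and $y_2 \in {\rm int}(C)$ imply $y_1 + y_2 \in {\rm int}(C)$, and that $t\beta\cdot(-{\rm int}(C)) = -{\rm int}(C)$ for $t,\beta > 0$. Both follow from $C$ being a convex cone with ${\rm int}(C)$ its (also conic, convex) interior, but they should be invoked explicitly rather than waved through. Everything else is bookkeeping with the properties of $\varphi_{_{C}}$ collected in Lemma \ref{varphi_property}.
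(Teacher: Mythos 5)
Your proposal is correct and follows essentially the same route as the paper: item (i) from nonstationarity via Remark \ref{pa_sta_equ1}, item (ii) by combining the Armijo inequality \eqref{armijio1} with $JF(x^{k})d^{k}\prec 0$, and item (iii) by applying $\varphi_{_{C}}$ to \eqref{armijio1}, using Lemma \ref{varphi_property}(iv)--(vi), and telescoping. Your explicit cone-arithmetic justification in (ii) and the derivation of $JF(x^{k})d^{k}\prec 0$ from $v^{k}<0$ via Lemma \ref{varphi_property}(\ref{varphi_property_ii}) are only minor elaborations of what the paper leaves implicit.
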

\begin{proof}
	(i) The result follows from \eqref{armijio} and the nonstationarity of $x^{k}$.
	
	(ii) For any $i$, by \eqref{armijio} and Lemma \ref{varphi_property}(vi)--(vi), we have
	\begin{equation}\label{sum}
		\begin{aligned}
			\varphi_{_{C}}(F(x^{i+1}))&\leq\varphi_{_{C}}(F(x^{i})+t_{i}\beta JF(x^{i})(s^{i}-x^{i}))\\
			&\leq\varphi_{_{C}}(F(x^{i}))+\varphi_{_{C}}(t_{i}\beta JF(x^{i})(s^{i}-x^{i}))\\
			&=\varphi_{_{C}}(F(x^{i}))+t_{i}\beta\varphi_{_{C}}(JF(x^{i})(s^{i}-x^{i}))\\
			&=\varphi_{_{C}}(F(x^{i}))+t_{i}\beta v^{i}.
		\end{aligned}
	\end{equation}
	From \eqref{sum} and observing that $v^{i}<0$, we obtain 
	\begin{equation*}
		t_{i}\lvert v^{i}\rvert=-t_{i} v^{i}\leq\frac{\varphi_{_{C}}(F(x^{i}))- \varphi_{_{C}}(F(x^{i+1}))}{\beta}.
	\end{equation*}
	Summing the above inequality from $i=0$ to $i=k$, the result is immediately derived.
\end{proof}

\begin{theorem}\label{convergence}
	Let $\{x^{k}\}$ be a sequence produced by the CondG algorithm with the Armijio step size condition \eqref{armijio}. Then, every accumulation point of $\{x^{k}\}$ is a stationary point of problem (\ref{mop}).
\end{theorem}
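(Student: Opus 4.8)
The plan is to argue by contradiction. Let $\bar{x}$ be an accumulation point of $\{x^{k}\}$, say $x^{k_{j}}\to\bar{x}$ along a subsequence, and suppose $\bar{x}$ is \emph{not} a stationary point of problem (\ref{mop}). By Proposition \ref{pa_sta_equ}(ii) together with the continuity of $v$ from Proposition \ref{opt_val_continuous}, this means $v(\bar{x})<0$ and $v^{k_{j}}=v(x^{k_{j}})\to v(\bar{x})<0$; in particular $\liminf_{j\to\infty}|v^{k_{j}}|>0$. The goal is to contradict this, i.e.\ to force $v^{k_{j}}\to 0$.

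First I would show that the series $\sum_{i\ge 0}t_{i}|v^{i}|$ converges, so that $t_{i}|v^{i}|\to 0$. Since $\Omega$ is compact and both $F$ and $\varphi_{_{C}}$ are continuous (Lemma \ref{varphi_property}(i)), the function $\varphi_{_{C}}\circ F$ is bounded below on $\Omega$; plugging a lower bound $\mu:=\inf_{x\in\Omega}\varphi_{_{C}}(F(x))>-\infty$ into the telescoped estimate of Proposition \ref{properties}(iii) gives $\sum_{i=0}^{k}t_{i}|v^{i}|\le\big(\varphi_{_{C}}(F(x^{0}))-\mu\big)/\beta$ for every $k$, so the (nonnegative) series converges and $t_{i}|v^{i}|\to 0$. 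Combined with $\liminf_{j}|v^{k_{j}}|>0$, this yields $t_{k_{j}}\to 0$.

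The main step exploits the Armijo backtracking. For $j$ large enough that $t_{k_{j}}<1$, the trial stepsize $\hat{t}_{j}:=t_{k_{j}}/\tau\in(0,1]$ was rejected in Algorithm \ref{algorithm2}, i.e.
\[
F(x^{k_{j}}+\hat{t}_{j}d^{k_{j}})\npreceq F(x^{k_{j}})+\hat{t}_{j}\beta\, JF(x^{k_{j}})d^{k_{j}}.
\]
Because $-C$ is closed, Lemma \ref{varphi_property}(ii) turns this non-domination (after the sign change $y\mapsto-y$, recalling $\varphi_{_{C}}=\triangle_{-C}$) into $\varphi_{_{C}}\!\big(F(x^{k_{j}}+\hat{t}_{j}d^{k_{j}})-F(x^{k_{j}})-\hat{t}_{j}\beta\, JF(x^{k_{j}})d^{k_{j}}\big)>0$. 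Writing the argument as $o_{j}+\hat{t}_{j}(1-\beta)JF(x^{k_{j}})d^{k_{j}}$, where $o_{j}:=F(x^{k_{j}}+\hat{t}_{j}d^{k_{j}})-F(x^{k_{j}})-\hat{t}_{j}JF(x^{k_{j}})d^{k_{j}}$, and using subadditivity, positive homogeneity and $1$-Lipschitzianity of $\varphi_{_{C}}$ (Lemma \ref{varphi_property}(i),(iv),(v)) together with $\varphi_{_{C}}(JF(x^{k_{j}})d^{k_{j}})=v^{k_{j}}$ and $v^{k_{j}}<0$ (Proposition \ref{properties}(i)), I get
\[
0<\|o_{j}\|+\hat{t}_{j}(1-\beta)v^{k_{j}},\qquad\text{hence}\qquad (1-\beta)\,|v^{k_{j}}|<\frac{\|o_{j}\|}{\hat{t}_{j}}.
\]
It then remains to check $\|o_{j}\|/\hat{t}_{j}\to 0$: the directions $d^{k_{j}}=s^{k_{j}}-x^{k_{j}}$ are bounded (both $s^{k_{j}},x^{k_{j}}\in\Omega$ and $\Omega$ is compact, cf.\ Remark \ref{diam}), the segment $x^{k_{j}}+\sigma d^{k_{j}}=(1-\sigma)x^{k_{j}}+\sigma s^{k_{j}}$, $\sigma\in[0,\hat t_j]$, lies in $\Omega$ by convexity, and $JF$ is uniformly continuous on the compact set $\Omega$, so the integral form of the mean value theorem bounds $\|o_{j}\|$ by $\hat{t}_{j}\,\|d^{k_{j}}\|\,\sup_{\sigma\in[0,\hat{t}_{j}]}\|JF(x^{k_{j}}+\sigma d^{k_{j}})-JF(x^{k_{j}})\|=o(\hat{t}_{j})$. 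Therefore $|v^{k_{j}}|\to 0$, contradicting $\liminf_{j}|v^{k_{j}}|>0$; hence $v(\bar{x})=0$ and, by Proposition \ref{pa_sta_equ}(ii), $\bar{x}$ is stationary.

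I expect the third paragraph to be the crux. The two delicate points are (a) converting the failed vector Armijo test into a scalar inequality through $\varphi_{_{C}}$ — one must keep in mind that $\varphi_{_{C}}=\triangle_{-C}$ certifies ``$\notin C$'' only after negating the vector, and then peel off the first-order term before applying subadditivity and homogeneity — and (b) guaranteeing that the remainder satisfies $o_{j}=o(\hat{t}_{j})$ uniformly along the iterates, which is precisely where compactness of $\Omega$ (boundedness of the $d^{k_{j}}$ and uniform continuity of $JF$) is needed. Everything else is the standard Frank--Wolfe / steepest-descent bookkeeping.
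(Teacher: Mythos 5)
Your proof is correct, but it takes a genuinely different route from the paper at the crucial step. The paper argues directly (not by contradiction): after obtaining $\sum_i t_i|v^i|<\infty$ from Proposition \ref{properties}(iii), it splits into the cases $\limsup_j t_{k_j}>0$ (where $v(\hat x)=0$ follows at once from continuity of $v$) and $t_{k_j}\to 0$; in the latter case it extracts a convergent subsequence of directions $d(x^{k_{j_i}})$, fixes a scale $\tau^l$, passes the failed Armijo test to the limit point to get $\varphi_{_{C}}\bigl(F(\hat x+\tau^l d(\hat x))-F(\hat x)-\tau^l\beta JF(\hat x)d(\hat x)\bigr)\ge 0$ for all $l$, and then invokes Lemma \ref{des_lemma} to conclude $JF(\hat x)d(\hat x)\nprec 0$, hence $v(\hat x)\ge 0$. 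You instead set up a contradiction ($v(\bar x)<0$ forces $t_{k_j}\to 0$, so no case split is needed), stay along the iterates with the rejected trial stepsize $\hat t_j=t_{k_j}/\tau$, and convert the failed Armijo test into the scalar bound $(1-\beta)|v^{k_j}|<\|o_j\|/\hat t_j$ via subadditivity, positive homogeneity and $1$-Lipschitzness of $\varphi_{_{C}}$, finishing with a Taylor-remainder estimate that uses compactness of $\Omega$ and uniform continuity of $JF$. What your version buys: you never need a limiting direction $d(\hat x)$, so you sidestep the paper's implicit (and not fully justified) identification of $\lim_i d(x^{k_{j_i}})$ with $d(\hat x)$ and of $\varphi_{_{C}}(JF(\hat x)d(\hat x))$ with $v(\hat x)$, which is delicate when the subproblem minimizer is not unique; you also avoid Lemma \ref{des_lemma} altogether and make explicit the lower bound $\inf_{x\in\Omega}\varphi_{_{C}}(F(x))>-\infty$ that the paper uses tacitly when passing to the limit in Proposition \ref{properties}(iii). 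What the paper's route buys is that it recycles the already-proved descent lemma and avoids the integral mean-value estimate, at the cost of the extra subsequence extraction. One cosmetic remark: your aside about ``negating the vector'' is unnecessary — the failed test says $F(x^{k_j}+\hat t_j d^{k_j})-F(x^{k_j})-\hat t_j\beta JF(x^{k_j})d^{k_j}\notin -C$, and since $-C$ is closed its complement is open, so $\varphi_{_{C}}=\triangle_{-C}$ of that vector is strictly positive directly by Lemma \ref{varphi_property}(ii), exactly as in your displayed inequality.
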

\begin{proof}
	Let $\hat{x}\in\Omega$ be a accumulation point of the sequence $\{x^{k}\}$. Then, there exists a subsequence $\{x^{k_{j}}\}$ of $\{x^{k}\}$ such that
	\begin{equation}\label{convergence0}
		\lim_{j\rightarrow\infty}x^{k_{j}}=\hat{x}.
	\end{equation}
	From Proposition \ref{opt_val_continuous} and (\ref{convergence0}), we have $v(x^{k_{j}})\rightarrow v(\hat{x})$ whenever $j\rightarrow\infty$. Here, it is sufficient to show that $v(\hat{x})=0$ in view of Proposition \ref{pa_sta_equ}(ii).
	
	Let $k=k_{j}$ in Proposition \ref{properties}(ii). Then
	\begin{equation*}\label{convergence1}
		\sum_{i=0}^{k_{j}}t_{i}\lvert v(x^{i})\rvert\leq\frac{\varphi_{_{C}}(F(x^{0}))-\varphi_{_{C}}(F(x^{k_{j}+1}))}{\beta}.
	\end{equation*}
	Taking $\lim_{j\rightarrow\infty}$ on both sides of the above inequality, we get $\sum_{i=0}^{\infty}t_{i}\lvert v(x^{i})\rvert<\infty$, which implies that $\lim_{k\rightarrow\infty}t_{k}v(x^{k})=0$, and in particular,
	\begin{equation}\label{convergence3}
		\lim_{j\rightarrow\infty} t_{k_{j}}v(x^{k_{j}})=0.
	\end{equation}
	Since $t_{k}\in(0,1]$ for all $k$, we have the following two alternatives:
	\begin{equation}\label{two_alternatives}
		{\rm (a)}\;\mathop{\lim\sup}_{j\rightarrow\infty} t_{k_{j}}>0\qquad{\rm or}\qquad{\rm (b)}\;\mathop{\lim\sup}_{j\rightarrow\infty} t_{k_{j}}=0.
	\end{equation}
	
	We first suppose that (\ref{two_alternatives})(a) holds. Then, there exists a subsequence $\{t_{k_{j_{i}}}\}$ of $\{t_{k_{j}}\}$ converging to some $\hat{t}>0$. And from (\ref{convergence0}), we have $\lim_{i\rightarrow\infty}x^{k_{j_{i}}}=\hat{x}$. Thus, (\ref{convergence3}) implies that $\lim_{i\rightarrow\infty}t_{k_{j_{i}}}v(x^{k_{j_{i}}})=0$, and furthermore, $\lim_{i\rightarrow\infty}v(x^{k_{j_{i}}})=0$. This, combined with Proposition \ref{opt_val_continuous}, gives that $0=\lim_{i\rightarrow\infty}v(x^{k_{j_{i}}})=v(\hat{x})$.
	
	We now consider (\ref{two_alternatives})(b). Clearly, $x^{k_{j}},s(x^{k_{j}})\in\Omega$ and $$\|d(x^{k_{j}})\|=\|s(x^{k_{j}})-x^{k_{j}}\|\leq{\rm diam}(\Omega)<\infty,$$
	i.e., the sequence $\{d(x^{k_{j}})\}$ is bounded. Now, we take subseqences $\{x^{k_{j_{i}}}\}$, $\{d(x^{k_{j_{i}}})\}$ and $\{t_{k_{j_{i}}}\}$ converging to $\hat{x}$, $d(\hat{x})$ and $0$, respectively. By (\ref{opt_val}) and Proposition \ref{properties}(i), we get
	\begin{equation*}\label{convergence4}
		\varphi_{_{C}}(JF(x^{k_{j_{i}}})d(x^{k_{j_{i}}}))=v(x^{k_{j_{i}}})<0.
	\end{equation*}
	Taking $\lim_{i\rightarrow\infty}$ on both sides of the above inequality and togethering with Proposition \ref{opt_val_continuous}, we have
	\begin{equation}\label{convergence4_0}
		v(\hat{x})\leq0.
	\end{equation}
	Take some fixed but arbitrary $l\in\mathbb{N}$, where $\mathbb{N}$ denotes the set of natural numbers. From $\lim_{i\rightarrow\infty}t_{k_{j_{i}}}=0$, we have $t_{k_{j_{i}}}<\tau^{l}$ for $i$ large enough. This shows that the Armijio condition is not satisfied at $x^{k_{j_{i}}}$ for $t=\tau^{l}$, that is,
	$$F(x^{k_{j_{i}}}+\tau^{l}d(x^{k_{j_{i}}}))\npreceq_{C} F(x^{k_{j_{i}}})+\tau^{l}\beta JF(x^{k_{j_{i}}})d(x^{k_{j_{i}}}),$$
	or, equivalently,
	$$F(x^{k_{j_{i}}}+\tau^{l}d(x^{k_{j_{i}}}))-F(x^{k_{j_{i}}})-\tau^{l}\beta JF(x^{k_{j_{i}}})d(x^{k_{j_{i}}})\notin-C,$$
	which means that
	\begin{equation}\label{convergence5}
		F(x^{k_{j_{i}}}+\tau^{l}d(x^{k_{j_{i}}}))-F(x^{k_{j_{i}}})-\tau^{l}\beta JF(x^{k_{j_{i}}})d(x^{k_{j_{i}}})\in\mathbb{R}^{m}\backslash(-C)={\rm int}(\mathbb{R}^{m}\backslash(-C)),
	\end{equation}
	where the equality holds in view of the closedness of $C$. By (\ref{convergence5}) and Lemma \ref{varphi_property}(ii), we have
	\begin{equation}\label{convergence6}
		\varphi_{_{C}}(F(x^{k_{j_{i}}}+\tau^{l}d(x^{k_{j_{i}}}))-F(x^{k_{j_{i}}})-\tau^{l}\beta JF(x^{k_{j_{i}}})d(x^{k_{j_{i}}}))>0.
	\end{equation}
	Since $F$ is continuously differentiable and $\varphi_{_{C}}$ is continuous, taking $\lim_{i\rightarrow\infty}$ in (\ref{convergence6}), we obtain
	\begin{equation}\label{convergence7}
		\varphi_{_{C}}(F(\hat{x}+\tau^{l}d(\hat{x}))-F(\hat{x})-\tau^{l}\beta JF(\hat{x})d(\hat{x}))\geq0,\quad\forall l\in\mathbb{N}.
	\end{equation}
	According to (\ref{convergence7}), Proposition \ref{des_lemma0} and Lemma \ref{varphi_property}(ii), we can obtain $JF(\hat{x})d(\hat{x})\nprec_{C}0$, i.e., $JF(\hat{x})d(\hat{x})\in\mathbb{R}^{m}\backslash(-{\rm int}(C))$. Thus, we have $v(\hat{x})=\varphi_{_{C}}(JF(\hat{x})d(\hat{x}))\geq0$ from Lemma \ref{varphi_property}(ii). This, combined with (\ref{convergence4_0}), yields that $v(\hat{x})=0$.
\end{proof}

It follows from Theorems \ref{stationary_weakly_pareto} and \ref{convergence} that the following result holds.
\begin{theorem}
	If $F$ is $C$-convex on $\Omega$, then the sequence $\{x^{k}\}$ produced by the CondG algorithm with the Armijio step size condition \eqref{armijio} converges to a weakly efficient solution of problem (\ref{mop}).
\end{theorem}

\subsection{Analysis of CondG algorithm with adaptative step size}

In this section, we analyze the convergence of the CondG algorithm with an adaptative step size rule. The differences from the work \cite{AFP2021} are the fact that we not only consider the general partial order but also do not require the Jacobian $JF$ of the objective function to be Lipschitz continuous.


In this sequel, we let $e=(e_{1},e_{2},\ldots,e_{m})^{\top}\in{\rm int}(C).$ Let us recall the useful concept that the Jacobian $JF$ of $F$ is Lipschitz continuous with Lipschitz constant $L_{F}>0$ on $\Omega$ if 
\begin{equation}\label{lip}
	\| JF(x)-  JF(y)\|\leq L_{F}\|x-y\|
\end{equation}
for all $x,y\in\Omega$. In subsequent convergence analysis, we shall use the following condition:
\begin{equation*}
	{\rm (A)}\quad \exists\; L>0 \quad {\rm s.t.}\;\; \frac{L}{2}\|\cdot\|_{2}^{2}e-F(\cdot) \;{\rm is}\;C{\rm \text{-}convex\; on}\; \Omega.
\end{equation*}
Actually, the introduction of this condition is inspired by the Lipschitz-like (LC) condition proposed by Bauschke et al. in  \cite{BBT2017}. 

\begin{remark}\normalfont\label{re4.1}
	Let $m=1$, $C=\mathbb{R}_{+}$ and $e=1$. In such case, if we set $h=\|\cdot\|^{2}/2$ and $g=f_{1}$ in \cite{BBT2017}, then the condition (A) can be regarded as the LC condition. Although it was assumed that $g$ is convex in the LC condition \cite{BBT2017}, the convexity assumption of $g$ plays no role in the LC condition, and $g$ can be nonconvex, as already observed in \cite{B_f2018}. Therefore, our condition (A) does not require $C$-convexity of $F$.
\end{remark}

We will see that, in the following Lemma \ref{descent_lemma}, the mere translation of condition (A) into its first-order characterization immediately gives the new and key vector version of descent lemma. For convenience, we let 
$G(\cdot)=(L/2)\|\cdot\|_{2}^{2}e-F(\cdot)$ in the sequel.

\begin{lemma}\label{descent_lemma}
	The condition (A) is equivalent to
	\begin{equation}\label{descent_lemma0}
		F(y)-F(x)\preceq_{C} JF(x)(y-x)+\frac{L}{2}\|y-x\|_{2}^{2}e, \quad \forall x,y\in\Omega.
	\end{equation}
\end{lemma}
\begin{proof}
	For any $x,y\in \Omega$, the function $G(\cdot)$ is $C$-convex on $\Omega$ if and only if
	\begin{equation}\label{descent_lemma_0}
		JG(x)(y-x)\preceq_{C} G(y)-G(x).
	\end{equation}
	For $JG(x)(y-x)$ in (\ref{descent_lemma_0}), by a simple calculation, we have
	\begin{equation}\label{descent_lemma2}
		JG(x)(y-x)=L\langle x, y\rangle e-L\|x\|_{2}^{2}e-JF(x)(y-x).
	\end{equation}
	From the notion of $G(\cdot)$, (\ref{descent_lemma_0}) and  (\ref{descent_lemma2}), we obtain
	\begin{equation*}
		\begin{aligned}[b]
			F(y)-F(x)&\preceq_{C} JF(x)(y-x)+\frac{L}{2}\|y\|_{2}^{2}e-\frac{L}{2}\|x\|_{2}^{2}e-L\langle x, y\rangle e+L\|x\|_{2}^{2}e\\
			&=JF(x)(y-x)+\frac{L}{2}(\|y\|_{2}^{2}+\|x\|_{2}^{2}-2\langle x, y\rangle)e\\
			&=JF(x)(y-x)+\frac{L}{2}\|y-x\|_{2}^{2}e,
		\end{aligned}
	\end{equation*}
	and thus the proof is complete.
\end{proof}

For the condition (A) and Lemma \ref{descent_lemma}, we have the following some remarks.

\begin{remark}\normalfont\label{re4.2}
	Let $C=\mathbb{R}_{+}^{m}$ and $e=(1,1,\ldots,1)$. In this case, as reported in \cite{AFP2021}, by using the same idea in the proof \cite[Lemma 2.4.2]{D_n1996}, we can also obtain \eqref{descent_lemma0} with $L= L_F$ from the Lipschitz continuity of $JF$. Therefore, the Lipschitz continuity of $JF$ implies that the condition (A) in view of Lemma \ref{descent_lemma}. However, the following example shows that the reverse relationship does not necessarily hold.
\end{remark}

\begin{example}\normalfont\label{ex3.1}
	Let $\Omega=[0,1]$ and 
	$$F(x)=\left(1-\frac{2}{3}x^{3/2},\left(x-\frac{1}{2}\right)^{2}\right)^{\top},\quad \forall x\in\Omega.$$
	The image of $F$ is shown in Figure \ref{figure}(a). Consider $C=\mathbb{R}_{+}^{2}$. It is easy to check that for $L=2$, $G$ is $\mathbb{R}_{+}^{2}$-convex on $\Omega$ (see Figure \ref{figure}(b)). However, $JF$ is not Lipschitz continuous on $\Omega$. It should be carefully noted that the conditional gradient method with adaptative step size described in \cite{AFP2021} cannot be applicable to this problem since their method relies on the Lipschitz continuity of $JF$. Therefore, as we shall see, our method is not a simple formal improvement.
\end{example}

\begin{figure}[H]
	\centering  
	\subfigure[]{
		\label{pk1}
		\includegraphics[width=0.31\textwidth]{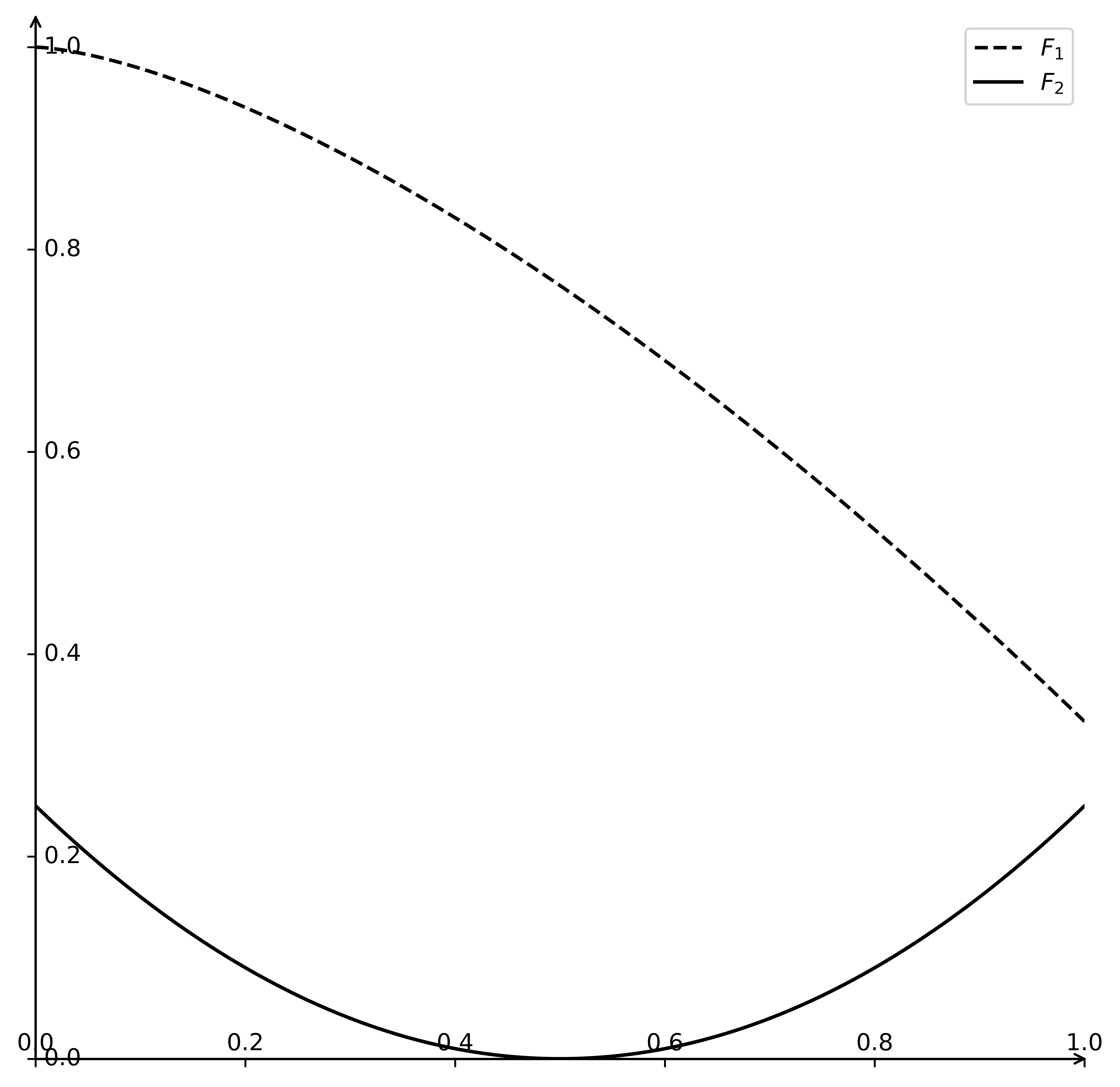}}
	\subfigure[]{
		\label{pk2}
		\includegraphics[width=0.31\textwidth]{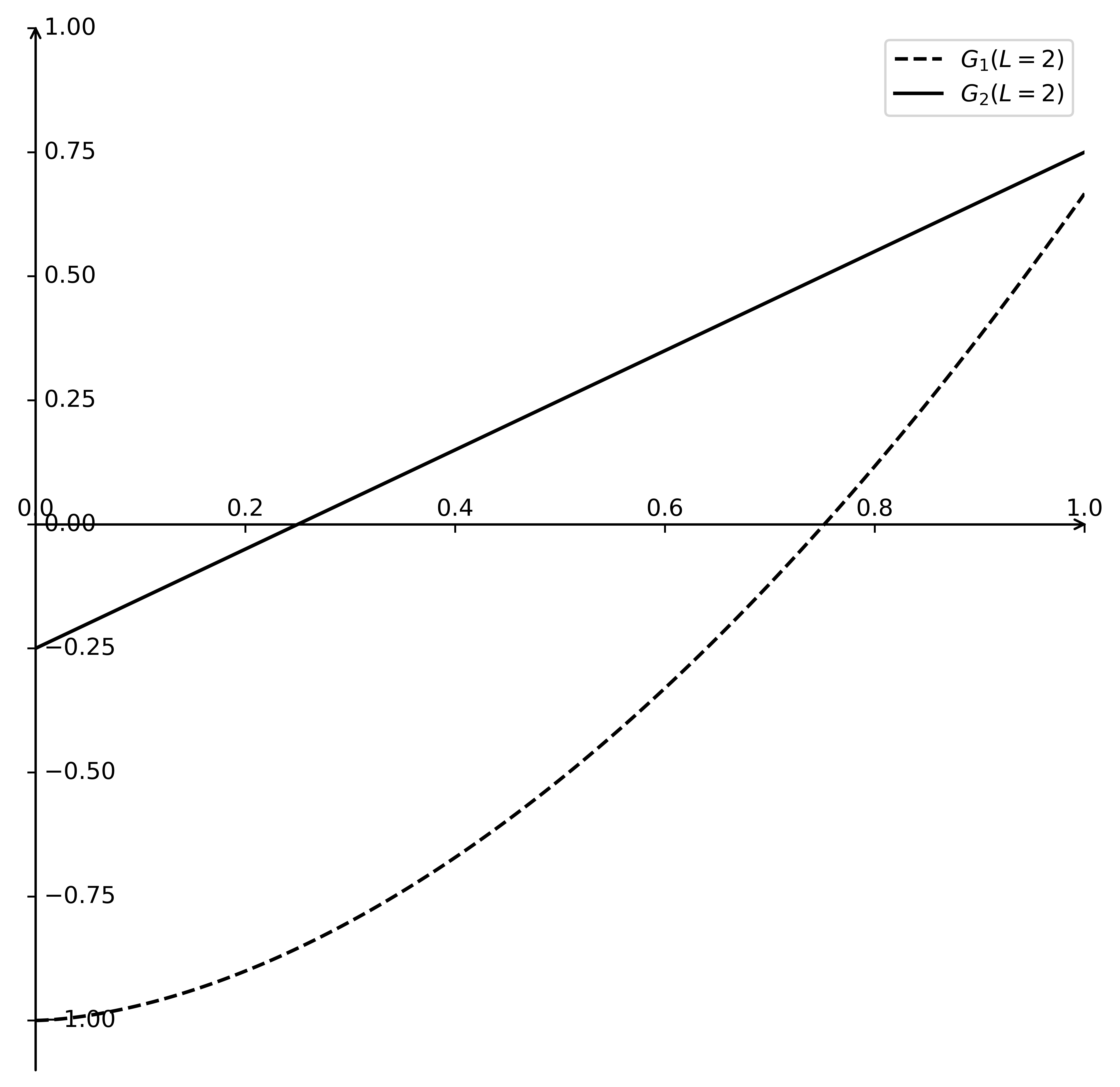}}
	\subfigure[]{
		\label{pk3}
		\includegraphics[width=0.31\textwidth]{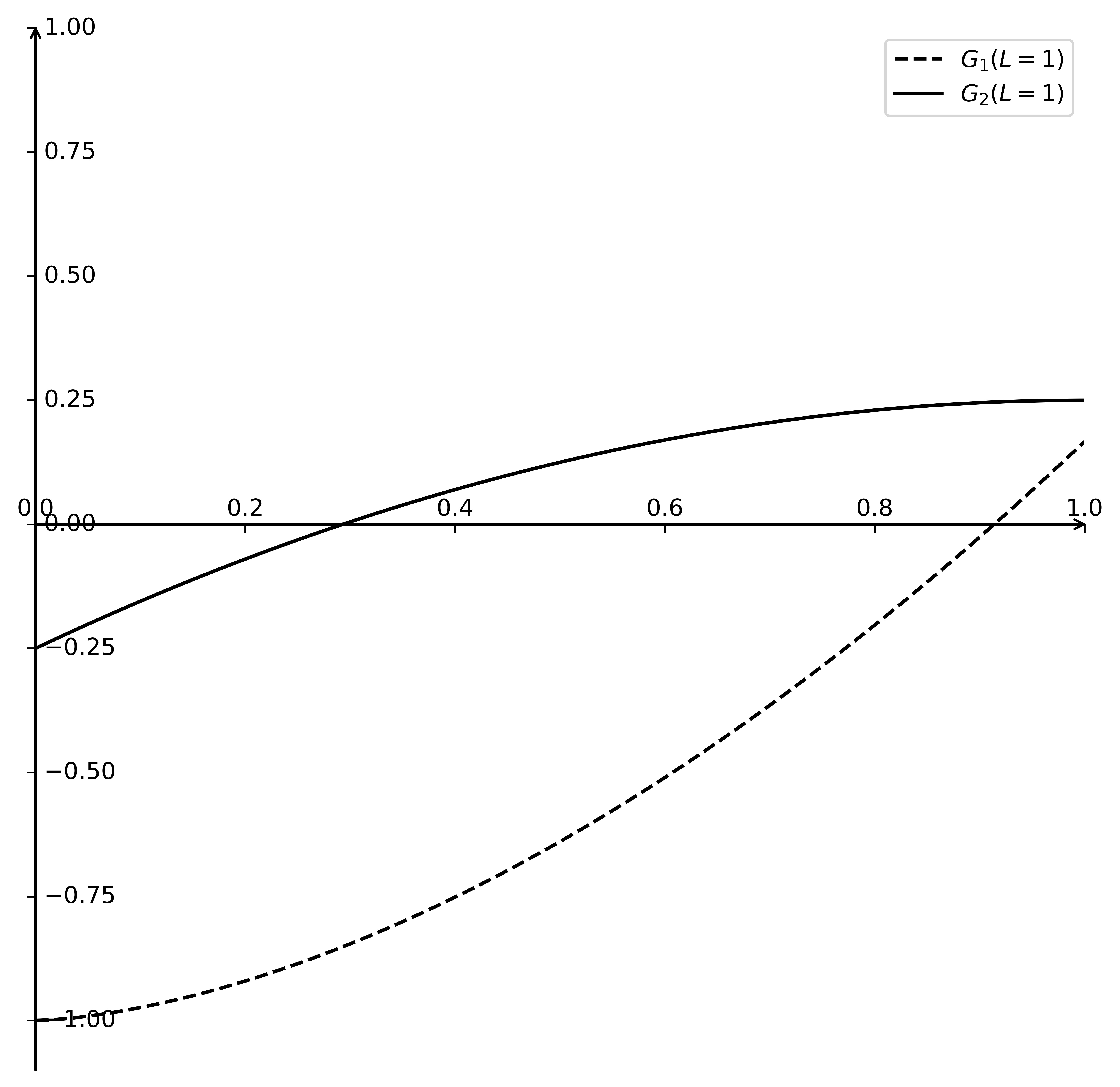}}
	\caption{ An illustration for $F$ and $G$.}
	\label{figure}
\end{figure}

\begin{remark}\normalfont\label{rem4.2}\label{rem4.3}
	For a general partial order $C$ rather than the non-negative orthant, the relationships between Lipschitz continuity of $JF$ and condition (A) seem less readily available. But we have the same observation as Remark \ref{re4.2} that the condition (A) does not imply the Lipschitz continuity of $JF$, the following example shows this case. Therefore, we propose an open question (Q1) in this paper: How to obtain the relations between them under some suitable assumptions for a general partial order $C$?
\end{remark}

\begin{example}\normalfont
	We continue to consider Example \ref{ex3.1}. Consider $C=\{y\in\mathbb{R}^{2}:0\leq y_{1}+y_{2},0\leq y_{1}\}$. It is easy to check that for $L=1$, $G$ is $C$-convex but not $\mathbb{R}_{+}^{2}$-convex on $\Omega$ (see Figure \ref{figure}(c)). However,  $JF$ is not Lipschitz continuous.
\end{example}

\begin{remark}\normalfont\label{descent_lemma1}
	Lemma \ref{descent_lemma} implies that, for any $x,y\in\Omega$,
	\begin{equation}\label{descent_lemma10}
		\varphi_{_{C}}(F(y))-\varphi_{_{C}}(F(x))\leq \psi_{x}(y)+\frac{L}{2}\|y-x\|_{2}^{2}\varphi_{_{C}}(e).
	\end{equation}
	Indeed, from (\ref{descent_lemma0}) and Lemma \ref{varphi_property}(iv)--(vi), we have
	\begin{equation}\label{descent_lemma11}
		\begin{aligned}
			\varphi_{_{C}}(F(y)-F(x))&\leq\varphi_{_{C}}\left(JF(x)(y-x)+\frac{L}{2}\|y-x\|_{2}^{2}e\right)\\
			&\leq\varphi_{_{C}}(JF(x)(y-x))+\varphi_{_{C}}\left(\frac{L}{2}\|y-x\|_{2}^{2}e\right)\\
			&=\varphi_{_{C}}(JF(x)(y-x))+\frac{L}{2}\|y-x\|_{2}^{2}\varphi_{_{C}}(e).
		\end{aligned}
	\end{equation}
	Obviously, according to Lemma \ref{varphi_property}(vi), it holds that
	\begin{equation}\label{descent_lemma12}
		\varphi_{_{C}}(F(y))-\varphi_{_{C}}(F(x))\leq\varphi_{_{C}}(F(y)-F(x)).
	\end{equation}
	Therefore, it immediately follows from (\ref{descent_lemma11}), (\ref{descent_lemma12}) and (\ref{psix}) that (\ref{descent_lemma10}) holds.
\end{remark}

We now give the adaptative step size strategy with the help of condition (A). 

\noindent\textbf{Adaptative step size} Assume that condition (A) holds. Define the step size as
\begin{equation}\label{ada}
	t_{k}=\min\left\{1,-\frac{v^{k}}{L\|d^{k}\|_{2}^{2}}\right\}
\end{equation}

Since $v(x)<0$ and $s(x)\neq x$ for nonstationary points, the adaptive step size for the CondG algorithm is well-defined. Before giving our convergence result, we first present an important property which is essential for showing convergence analysis.

\begin{lemma}\label{varphi_descent}
	Take $e\in {\rm int}(C)$ with $\varphi_{_{C}}(e)<2$. Suppose that condition (A) holds and $\{x^{k}\}$ is a sequence produced by the CondG algorithm with the adaptive step size. Then, for all $k$, it holds that
	\begin{equation}\label{d00}
		\varphi_{_{C}}(F(x^{k+1}))-\varphi_{_{C}}(F(x^{k}))\leq\frac{\varphi_{_{C}}(e)-2}{2}\min\left\{\frac{(v^{k})^{2}}{L({\rm diam}(\Omega))^{2}},-v^{k}\right\}.
	\end{equation}
\end{lemma}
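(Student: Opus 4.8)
The plan is to feed the adaptive stepsize $t_{k}=\min\{1,-v^{k}/(L\|d^{k}\|_{2}^{2})\}$ into the vector descent inequality \eqref{descent_lemma10} of Remark \ref{descent_lemma1} applied at $x=x^{k}$, $y=x^{k+1}=x^{k}+t_{k}d^{k}$, and then bound the resulting quadratic in $t_{k}$. First I would note that $y-x = t_{k}d^{k}$, so $\psi_{x^{k}}(x^{k+1}) = \varphi_{_{C}}(JF(x^{k})(t_{k}d^{k})) = t_{k}\varphi_{_{C}}(JF(x^{k})d^{k}) = t_{k}v^{k}$ by positive homogeneity (Lemma \ref{varphi_property}(iv)) and the definition of $v^{k}$. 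Hence \eqref{descent_lemma10} reads
\begin{equation*}
	\varphi_{_{C}}(F(x^{k+1}))-\varphi_{_{C}}(F(x^{k}))\leq t_{k}v^{k}+\frac{L}{2}t_{k}^{2}\|d^{k}\|_{2}^{2}\varphi_{_{C}}(e).
\end{equation*}
The hypothesis $\varphi_{_{C}}(e)<2$ then lets me replace $\varphi_{_{C}}(e)$ by $2$ at the cost of an inequality only if $\varphi_{_{C}}(e)\ge 0$; since $e\in{\rm int}(C)\subset\mathbb{R}^m\setminus(-C)$, Lemma \ref{varphi_property}(ii) gives $\varphi_{_{C}}(e)>0$, so indeed $0<\varphi_{_{C}}(e)<2$ and the quadratic coefficient is strictly positive but bounded.

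Next I would split into the two cases determined by which term achieves the minimum in $t_{k}$. \textbf{Case 1:} $-v^{k}/(L\|d^{k}\|_{2}^{2})\le 1$, so $t_{k}=-v^{k}/(L\|d^{k}\|_{2}^{2})$. Substituting,
\begin{equation*}
	t_{k}v^{k}+\frac{L}{2}t_{k}^{2}\|d^{k}\|_{2}^{2}\varphi_{_{C}}(e)=-\frac{(v^{k})^{2}}{L\|d^{k}\|_{2}^{2}}+\frac{\varphi_{_{C}}(e)}{2}\cdot\frac{(v^{k})^{2}}{L\|d^{k}\|_{2}^{2}}=\frac{\varphi_{_{C}}(e)-2}{2}\cdot\frac{(v^{k})^{2}}{L\|d^{k}\|_{2}^{2}}.
\end{equation*}
Since $x^{k},s^{k}\in\Omega$, Remark \ref{diam} gives $\|d^{k}\|_{2}=\|s^{k}-x^{k}\|_{2}\le{\rm diam}(\Omega)$, and because $\varphi_{_{C}}(e)-2<0$ the fraction $(v^{k})^{2}/(L\|d^{k}\|_{2}^{2})$ can only decrease the bound when its denominator grows, so $\frac{\varphi_{_{C}}(e)-2}{2}\cdot\frac{(v^{k})^{2}}{L\|d^{k}\|_{2}^{2}}\le\frac{\varphi_{_{C}}(e)-2}{2}\cdot\frac{(v^{k})^{2}}{L({\rm diam}(\Omega))^{2}}$, which is $\le$ the right-hand side of \eqref{d00}. \textbf{Case 2:} $t_{k}=1\le-v^{k}/(L\|d^{k}\|_{2}^{2})$, i.e.\ $L\|d^{k}\|_{2}^{2}\le-v^{k}$. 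Then the bound becomes $v^{k}+\frac{L}{2}\|d^{k}\|_{2}^{2}\varphi_{_{C}}(e)\le v^{k}+\frac{\varphi_{_{C}}(e)}{2}(-v^{k})=\frac{\varphi_{_{C}}(e)-2}{2}(-v^{k})$, again $\le$ the right-hand side of \eqref{d00}.

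In both cases the common bound dominates $\frac{\varphi_{_{C}}(e)-2}{2}\min\{(v^{k})^{2}/(L({\rm diam}(\Omega))^{2}),-v^{k}\}$, which establishes \eqref{d00}. The only mildly delicate point, and the part I would be most careful about, is the bookkeeping of inequality directions: $\varphi_{_{C}}(e)-2$ is negative, so every time I enlarge a positive quantity multiplied by it (replacing $\|d^{k}\|_{2}^{2}$ by $({\rm diam}(\Omega))^{2}$, or replacing the true minimum by a smaller quantity), the inequality goes the way I want, and I must make sure each such replacement is logged explicitly rather than silently. Everything else is routine algebra and the already-established lemmas.
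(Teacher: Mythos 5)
Your proof is correct and follows essentially the same route as the paper: apply the descent inequality (\ref{descent_lemma10}) at $x=x^{k}$, $y=x^{k+1}$, use positive homogeneity to reduce to $t_{k}v^{k}+\tfrac{L}{2}t_{k}^{2}\|d^{k}\|_{2}^{2}\varphi_{_{C}}(e)$, and split into the two stepsize cases, bounding via $\|d^{k}\|_{2}\le{\rm diam}(\Omega)$ and the sign of $\varphi_{_{C}}(e)-2$. Your explicit remarks that $\varphi_{_{C}}(e)>0$ (needed in the $t_{k}=1$ case) and that each case bound is dominated by the min-based right-hand side are details the paper leaves implicit, and they are handled correctly.
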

\begin{proof}
	Let $x^{k+1}=x^{k}+t_{k}d^{k}$, where $d^{k}=s^{k}-x^{k}$ and
	\begin{equation}\label{d0}
		t_{k}=\min\left\{1,-\frac{v^{k}}{L\|d^{k}\|_{2}^{2}}\right\}.
	\end{equation}
	Since condition (A) holds, then by (\ref{descent_lemma10}) invoked with $x=x^{k}$ and $y=x^{k+1}$, we have
	\begin{equation}\label{d2}
		\begin{aligned}
			\varphi_{_{C}}(F(x^{k+1}))-\varphi_{_{C}}(F(x^{k}))&\leq\psi_{x^{k}}(x^{k}+t_{k}(s^{k}-x^{k}))+\frac{L}{2}t_{k}^{2}\|d^{k}\|_{2}^{2}\varphi_{_{C}}(e)\\
			&= t_{k}\psi_{x^{k}}(s^{k})+\frac{L}{2}t_{k}^{2}\|d^{k}\|_{2}^{2}\varphi_{_{C}}(e)\\
			&=t_{k}v^{k}+\frac{L}{2}t_{k}^{2}\|d^{k}\|_{2}^{2}\varphi_{_{C}}(e),
		\end{aligned}
	\end{equation}
	where the first equality holds in view of (\ref{psix}) and Lemma \ref{varphi_property}(iv). According to (\ref{d0}), there are two options:
	
	\emph{Case 1}. Let $t_{k}=1$. This, combined with (\ref{d0}), gives 
	\begin{equation}\label{d4}
		L\|d^{k}\|_{2}^{2}\leq -v^{k}.
	\end{equation}
	By (\ref{d2}) and (\ref{d4}), we obtain
	\begin{equation}\label{d5}
		\varphi_{_{C}}(F(x^{k+1}))-\varphi_{_{C}}(F(x^{k}))\leq\frac{2-\varphi_{_{C}}(e)}{2}v^{k}.
	\end{equation}
	
	\emph{Case 2}. Let $t_{k}=-v^{k}/(L\|d^{k}\|_{2}^{2})$. Clearly, $\|d^k\|=\|s^{k}-x^{k}\|\leq{\rm diam}(\Omega)$. This, together with (\ref{d2}) and the fact that $\varphi_{_{C}}(e)<2$, yields
	\begin{equation}\label{d3}
		\begin{aligned}
			\varphi_{_{C}}(F(x^{k+1}))-\varphi_{_{C}}(F(x^{k}))&\leq\frac{\varphi_{_{C}}(e)-2}{2}\frac{(v^{k})^{2}}{L\|d^{k}\|_{2}^{2}}\\
			&\leq\frac{\varphi_{_{C}}(e)-2}{2}\frac{(v^{k})^{2}}{L({\rm diam}(\Omega))^{2}}.
		\end{aligned}
	\end{equation}
	Therefore, (\ref{d00}) is directly derived by (\ref{d5}) and (\ref{d3}).
\end{proof}

\begin{theorem}\label{vk0}
	Suppose that all conditions of Lemma \ref{varphi_descent} hold. Then, every accumulation point of $\{x^{k}\}$ is a stationary point of problem (\ref{mop}).  
\end{theorem}
\begin{proof}
	From $(\ref{d00})$ and $\varphi_{_{C}}(e)<2$ and observing that $v^{k}<0$, we have for all $k$,
	\begin{equation}\label{vk00}
		\varphi_{_{C}}(F(x^{k+1}))-\varphi_{_{C}}(F(x^{k}))\leq\frac{\varphi_{_{C}}(e)-2}{2}\min\left\{\frac{(v^{k})^{2}}{L({\rm diam}(\Omega))^{2}},-v^{k}\right\}\\
		<0,
	\end{equation}
	i.e., $\varphi_{_{C}}(F(x^{k+1}))<\varphi_{_{C}}(F(x^{k}))$, which implies that $\{\varphi_{_{C}}(F(x^{k}))\}$ is nonincreasing for all $k$. By Remark \ref{feasible} and the continuity of $F$, there exits $\bar{F}\in\mathbb{R}^{m}$ such that $\bar{F}\preceq_{C} F(x^{k})$ for all $k$, it follows from Lemma \ref{varphi_property}(vi) that $\varphi_{_{C}}(\bar{F})\leq\varphi_{_{C}}(F(x^{k}))$ for all $k$. Therefore, we conclude that the sequence $\{\varphi_{_{C}}(F(x^{k}))\}$ is convergent. This obviously means that 
	\begin{equation}\label{vk001}
		\lim_{k\rightarrow\infty}(\varphi_{_{C}}(F(x^{k+1}))-\varphi_{_{C}}(F(x^{k})))=0.
	\end{equation}
	Taking $\lim_{k\rightarrow\infty}$ in (\ref{vk00}), and then combining with (\ref{vk001}), we have
	\begin{equation}\label{vk01}
		\lim_{k\rightarrow\infty}v(x^k)=0.
	\end{equation} 
	From Proposition \ref{pa_sta_equ}(ii), Proposition \ref{opt_val_continuous} and (\ref{vk01}), we obtain that each accumulation point of $\{x^{k}\}$ is a stationary point of problem (\ref{mop}). 
\end{proof}

\begin{remark}\normalfont
	If we consider the norm $\|\cdot\|_{\infty}$, $C=\mathbb{R}_{+}^{m}$ and $e=(1,1,\ldots,1)\in{\rm int}(\mathbb{R}_{+}^{m})$, then $\varphi_{_{C}}(e)=1<2$, and thus Theorem \ref{vk0} is an improvement of the corresponding convergence result presented in \cite{AFP2021}. 
\end{remark}

According to Theorems \ref{stationary_weakly_pareto} and \ref{vk0}, we have the following result.

\begin{theorem}\normalfont
	If $F$ is $C$-convex on $\Omega$ and all conditions of Theorem \ref{vk0}, then the sequence $\{x^{k}\}$ produced by the CondG algorithm with the adaptative step size strategy \eqref{ada} converges to a weakly efficient solution of problem (\ref{mop}).
\end{theorem}

\subsection{Analysis of CondG algorithm with nonmonotone line search}

In this section, we extend a nonmonotone line search for scalar optimization proposed in \cite{G_i2008} to the vector setting and establish the convergence  property of the CondG algorithm with it. 

In the sequel, we denote $$D^{k}=(D_{1}^{k},D_{2}^{k},\ldots,D_{m}^{k})^{\top}\in\mathbb{R}^{m}.$$

\noindent\textbf{Nonmonotone line search} Let $\beta\in(0,1)$, $\delta\in(0,1)$, $\eta_{\max}\in[0,1)$ and $\tau\in(0,1]$. Choose $t_{k}$ as the largest value in $\{\tau,\delta\tau,\delta^{2}\tau,\ldots\}$ such that
\begin{equation}\label{nonmonotone_line_search}
	F(x^{k}+t_{k}d^{k})\preceq_{C} D^{k}+\beta t_{k} JF(x^{k})d^{k},
\end{equation}
where
\begin{equation}\label{nonmonotone_line_search1}
	\left\{\begin{array}{lllll}
		D^{0}=F(x^{0}) &\quad k=0,\\
		D^{k}=\eta_{k} D^{k-1}+(1-\eta_{k})F(x^{k}), &\quad k\geq1,
	\end{array}\right.
\end{equation}
with $0\leq\eta_{k}\leq\eta_{\max}<1$.

\begin{remark}\normalfont
	The current nonmonotone term in \eqref{nonmonotone_line_search1} is a convex combination of the previous nonmonotone term and the current objective function value, instead of the maximum of recent objective function values or an average of the successive objective function values. Note that $\eta_{k}$ controls the degree of nonmonotonicity in $D^{k}$. If $\eta_{k}= 0$ for all $k$, then $D^{k}=F(x^{k})$ and, as a consequence, the nonmonotone line search reduces to the Armijio line search \eqref{armijio}.
\end{remark}

\begin{lemma}\label{fk_leq_dk}
	Let $\{x^{k}\}$ be the sequence generated by the CondG algorithm with the nonmonotone line search. If $JF(x^{k})d^{k}\preceq_{C} 0$, then $F(x^{k})\preceq_{C} D^{k}$ for all $k\geq0$.
\end{lemma}

\begin{proof}
	We proceed by induction. Obviously, $F(x^{k})= D^{k}$ for $k=0$. Assume that the conclusion holds for $k>0$. We shall prove that $F(x^{k+1})\preceq_{C} D^{k+1}$ for $k+1$. By $JF(x^{k})d^{k}\preceq_{C}0$ and \eqref{nonmonotone_line_search}, one has
	\begin{equation}\label{fk_leq_dk2}
		F(x^{k+1})\preceq_{C} D^{k}.
	\end{equation}
	By \eqref{nonmonotone_line_search1}, we have $D^{k+1}=\eta_{k+1} D^{k}+(1-\eta_{k+1})F(x^{k+1})$. Re-arranging and subtracting $F(x^{k+1})$ from both sides of this equality gives
	\begin{equation*}
		D^{k+1}-F(x^{k+1})=\eta_{k+1}(D^{k}-F(x^{k+1})),
	\end{equation*}
	which, combined with (\ref{fk_leq_dk2}), yields that $F(x^{k+1})\preceq_{C} D^{k+1}$. This completes the proof.
\end{proof}

In the next proposition, we prove that the CondG algorithm with the nonmonotone line search is well-defined, so that the iterates $\{x^{k}\}$ can be generated.

\begin{proposition}\label{non_des_lemma}
	Let $x^{k}$ be an iterate of the CondG algorithm with the nonmonotone line search. If $JF(x^{k})d^{k}\prec_{C}0$, then there exists some $\hat{t}\in(0,1]$ such that
	\begin{equation*}
		F(x^{k}+t_{k}d^{k})\prec_{C} D^{k}+\beta t JF(x^{k})d^{k},\quad \forall t\in(0,\hat{t}].
	\end{equation*}
\end{proposition}

\begin{proof}
	The proof follows from \cite[Proposition 2.1]{DS2005} and Lemma \ref{fk_leq_dk}.
\end{proof}

\begin{lemma}\label{non_convergence}
	Let $\{x^{k}\}$ be the sequence generated by the CondG algorithm with the nonmonotone line search. Then, the following statements hold:
	\begin{enumerate}[{\rm(i)}]
		\item The sequence $\{D^{k}\}$ is monotone decreasing;
		\item For all $k\geq0$, $x^{k}\in\mathcal{L}\cap\Omega$, where $\mathcal{L}=\{x\in\mathbb{R}^{n}:F(x)\preceq_{C} F(x^{0})\}$ and $x^{0}\in\mathbb{R}^{n}$ is a given initial point.
	\end{enumerate}
\end{lemma}

\begin{proof}
	Let us first prove item (i). According to \eqref{nonmonotone_line_search1} and \eqref{fk_leq_dk2}, for all $k\geq0$, we have
	\begin{equation}\label{non_convergence1}
		D^{k+1}=\eta_{k+1} D^{k}+(1-\eta_{k+1})F(x^{k+1})\preceq_{C}\eta_{k+1} D^{k}+(1-\eta_{k+1})D^{k}=D^{k},
	\end{equation}
	which implies that item (i) holds.
	
	We proceed item (ii) by induction. For $k=0$, $x^{0}\in\mathcal{L}$ is clear. Assume that $x^{k}\in\mathcal{L}$ when $k\leq l$, that is, $F(x^{k})\preceq_{C} F(x^{0})$ for all $k\leq l$. Note that, by Lemma \ref{fk_leq_dk} and \eqref{non_convergence1}, one has
	\begin{equation*}
		F(x^{l+1})\preceq_{C} D^{l+1}\preceq_{C} D^{l}\preceq_{C}\cdots\preceq_{C} D^{0}=F(x^{0}),
	\end{equation*}
	i.e., $x^{k+1}\in\mathcal{L}$. This, together with Remark \ref{feasible}, gives $x^{k}\in\mathcal{L}\cap\Omega$.  
\end{proof}

\begin{lemma}\label{Dk_convergence}
	The sequence $\{D^{k}\}$ is convergent, and
	\begin{equation}\label{Dk_convergence1}
		\lim_{k\rightarrow\infty}\varphi_{_{C}}(F(x^{k+1})-D^{k})= 0.
	\end{equation}
\end{lemma}

\begin{proof}
	Since $F$ is continuous, by Lemma \ref{non_convergence}(ii) and Lemma \ref{fk_leq_dk}, there exist $\bar{F}\in\mathbb{R}^{m}$ such that $\bar{F}\preceq_{C} F(x^{k})\preceq_{C} D^{k}$, i.e., $\{D^{k}\}$ is bounded from below. Hence, it is convergent by Lemma \ref{non_convergence}(i). From the continuity of $\varphi_{_{C}}$, we immediately get $\{\varphi_{_{C}}(D^{k})\}$ is convergent. Using \eqref{nonmonotone_line_search1} and \eqref{fk_leq_dk2}, we have
	\begin{equation}\label{Dk_convergence2}
		D^{k+1}-D^{k}=(1-\eta_{k+1})(F(x^{k+1})-D^{k})\preceq_{C}(1-\eta_{\max})(F(x^{k+1})-D^{k})\preceq_{C}0.	
	\end{equation}
	By \eqref{Dk_convergence2} and Lemma \ref{varphi_property}(iv)--(vi) and observing that $\varphi_{_{C}}(0)=0$, we obtain
	\begin{equation*}
		\varphi_{_{C}}(D^{k+1})-\varphi_{_{C}}(D^{k})\leq \varphi_{_{C}}(D^{k+1}-D^{k})\leq(1-\eta_{\max})\varphi_{_{C}}(F(x^{k+1})-D^{k})\leq0.
	\end{equation*}
	Therefore, \eqref{Dk_convergence1} holds by taking $\lim_{k\rightarrow\infty}$ on the above relation.
\end{proof}

\begin{theorem}\label{non_convergence_analysis}
	Let $\{x^{k}\}$ be a sequence produced by the CondG algorithm with the nonmonotone line search \eqref{nonmonotone_line_search}. Then, every accumulation point of $\{x^{k}\}$ is a stationary point of problem (\ref{mop}).
\end{theorem}

\begin{proof}
	Assume that $\hat{x}\in\Omega$ is a accumulation point of the sequence $\{x^{k}\}$, i.e., there exists a subsequence $\{x_{k_{j}}\}$ of $\{x^{k}\}$ such that $\lim_{j\rightarrow\infty}x_{k_{j}}=\hat{x}$. According to Proposition \ref{pa_sta_equ}(ii), it is enough to see that $v(\hat{x})=0$. By \eqref{nonmonotone_line_search} and Lemma \ref{varphi_property}(iv)--(vi), we have
	\begin{equation}\label{non_convergence_analysis1}
		\varphi_{_{C}}(F(x^{k}+t_{k}d^{k})-D^{k})\leq\beta t_{k}\varphi_{_{C}}(JF(x^{k})d^{k}).
	\end{equation}
	Using \eqref{Dk_convergence1}, \eqref{non_convergence_analysis1} and the fact that $JF(x^{k})d^{k}\prec_{C}0$, as well as Lemma \ref{varphi_property}(ii), we get
	\begin{equation*}
		0=\lim_{k\rightarrow\infty}\varphi_{_{C}}(F(x^{k}+t_{k}d^{k})-D^{k})\leq\lim_{k\rightarrow\infty}\beta t_{k}\varphi_{_{C}}(JF(x^{k})d^{k})\leq0.
	\end{equation*}
	Therefore, 
	\begin{equation}\label{non_convergence_analysis2}
		\lim_{k\rightarrow\infty} t_{k}v(x^{k})=\lim_{k\rightarrow\infty} t_{k}\varphi_{_{C}}(JF(x^{k})d^{k})=0.
	\end{equation}
	We now consider the subsequence $\{t_{k_{j}}\}$ of $\{t_{k}\}$. Taking into consideration that $t_{k}\in(0,1]$ for all $k$, we have the following two cases
	\begin{equation}\label{non_convergence_analysis3}
		{\rm (a)}\;\mathop{\lim\sup}_{j\rightarrow\infty} t_{k_{j}}>0\qquad{\rm or}\qquad{\rm (b)}\;\mathop{\lim\sup}_{j\rightarrow\infty} t_{k_{j}}=0.
	\end{equation}
	Repeating now the corresponding arguments in the proof of Theorem \ref{convergence}, we arrive at \eqref{convergence4_0}, i.e.,
	\begin{equation}\label{non_convergence_analysis3_1}
		v(\hat{x})\leq0.
	\end{equation}
	Take now some fixed but arbitrary positive integer $l$. Since $\lim_{i\rightarrow\infty}t_{k_{j_{i}}}=0$, we have $t_{k_{j_{i}}}<\tau^{l}$ for $i$ large enough, which means that the nonmonotone line search condition does not hold at $x^{k_{j_{i}}}$ for $t=\tau^{l}$, i.e.,
	$$F(x^{k_{j_{i}}}+\tau^{l}d(x^{k_{j_{i}}}))\npreceq_{C} D^{k_{j_{i}}}+\tau^{l}\beta JF(x^{k_{j_{i}}})d(x^{k_{j_{i}}}).$$
	Equivalently,
	$$F(x^{k_{j_{i}}}+\tau^{l}d(x^{k_{j_{i}}}))-D^{k_{j_{i}}}-\tau^{l}\beta JF(x^{k_{j_{i}}})d(x^{k_{j_{i}}})\notin-C,$$
	which means that
	\begin{equation}\label{non_convergence_analysis4}
		F(x^{k_{j_{i}}}+\tau^{l}d(x^{k_{j_{i}}}))-D^{k_{j_{i}}}-\tau^{l}\beta JF(x^{k_{j_{i}}})d(x^{k_{j_{i}}})\in\mathbb{R}^{m}\backslash(-C)={\rm int}(\mathbb{R}^{m}\backslash(-C)),
	\end{equation}
	By (\ref{non_convergence_analysis4}) and Lemma \ref{varphi_property}(ii), we have
	\begin{equation}\label{non_convergence_analysis5}
		\varphi_{_{C}}(F(x^{k_{j_{i}}}+\tau^{l}d(x^{k_{j_{i}}}))-D^{k_{j_{i}}}-\tau^{l}\beta JF(x^{k_{j_{i}}})d(x^{k_{j_{i}}}))>0.
	\end{equation}
	Since \eqref{non_convergence_analysis5} holds for any positive integer $l$, using Proposition \ref{non_des_lemma} with $k=k_{j}$ and Lemma \ref{varphi_property}(vi), we conclude that $JF(x^{k_{j}})d(x^{k_{j}})\nprec_{C}0$, i.e., $JF(x^{k_{j}})d(x^{k_{j}})\in\mathbb{R}^{m}\backslash(-{\rm int}(C))$. By Lemma \ref{varphi_property}(ii), it holds that 
	\begin{equation}\label{non_convergence_analysis6}
		\varphi_{_{C}}(JF(x^{k_{j}})d(x^{k_{j}}))\geq0.
	\end{equation}
	Since $F$ is continuously differentiable and $\varphi_{_{C}}$ is continuous, taking limits in (\ref{non_convergence_analysis6}) with $i\rightarrow\infty$, we have $v(\hat{x})=\varphi_{_{C}}(JF(\hat{x})d(\hat{x}))\geq0$. This, combined with (\ref{non_convergence_analysis3_1}), yields that $v(\hat{x})=0$.
\end{proof}

From Theorems \ref{stationary_weakly_pareto} and \ref{non_convergence_analysis}, we obtain the following result.

\begin{theorem}\normalfont
	If $F$ is $C$-convex on $\Omega$ and all conditions of Theorem \ref{non_convergence_analysis}, then the sequence $\{x^{k}\}$ produced by the CondG algorithm with  the nonmonotone line search \eqref{nonmonotone_line_search} converges to a weakly efficient solution of problem (\ref{mop}).
\end{theorem}

\section{Conclusion}\label{sec13}

In this work, we have proposed a conditional gradient method for solving constrained vector optimization problems. In our method, the auxiliary subproblem used to obtain the descent direction is constructed based on the well-known Hiriart-Urruty's oriented distance function, and the step sizes are obtained by Armijio, adaptative and nonmonotone strategies. We proved the sequence generated by the method can converge to a stationary point no matter how bad is our initializing point. It is worth mentioning that in the convergence analysis of the conditional gradient method with adaptative step size rule, we do not assume the Lipschitz continuity of the objective's Jacobian, but with the help of a flexible convexity condition given by us. Such setting is obviously different from the work in \cite{AFP2021}.

An open question (Q1) has been proposed naturally in Remark \ref{rem4.3}, and it is worth investigating in our future work. Furthermore, it would be interesting in the future to revisit the convergence results of the existing methods for multiobjective/vector optimization problems by using our proposed condition (A) instead of the Lipschitz continuity of Jacobian.

It is noteworthy that our method is conceptual and theoretical schemes rather than implementable algorithms. Similar issues also appear in the literature; see, e.g., \cite{DS2005, FD2013, LC2014, QJJZ2017, C2010, CY2012, C2013, BG2018, GP2020, DI2004}). Therefore, the computational efficiency of the method to a real-world optimization problem depends essentially on the choice of a good feature and structure of the minimization subproblem \eqref{sca_pro1} at every iteration. For example, when the norm $\|\cdot\|_{\infty}$ is used and $C=\mathbb{R}_{+}^{m}$, the objective function $\psi_{x}$ in (\ref{sca_pro1}) has the simple form \eqref{sca_pro11}, and then the descent direction can be easily computed by program. In such situation, it is worthwhile to verify the performance of our method with the nonmonotone line search. For the general case, as recommended in \cite{CY2012}, we can use the bundle method presented in \cite{H_c1993} to solve the problem \eqref{sca_pro1}. We leave these issues as subjects for future researches.


\begin{thebibliography}{99}\setlength{\itemsep}{0ex}\footnotesize
	\bibitem{L1989} Luc, T.D.: Theory of Vector Optimization, Lecture Notes in Economics and Mathematical Systems. Vol. 319, Springer-Verlag, Berlin (1989)
	\bibitem{J2011} John, J.: Vector Optimization: Theory, Applications and Extensions. 2nd ed. Springer, Berlin (2011)
	
	\bibitem{R_m2013} Rangaiah, G.P.,  Bonilla-Petriciolet, A.: Multi-Objective Optimization in Chemical Engineering: Developments and Applications. John Wiley \& Sons (2013)
	\bibitem{Z_m2015} Zopounidis, C., Galariotis, E., Doumpos, M., Sarri, S., Andriosopoulos, K.: Multiple criteria decision aiding for finance: An updated bibliographic survey. Eur. J. Oper. Res. \textbf{247}(2), 339--348 (2015)
	
	\bibitem{F_o2001} Fliege, J.: OLAF-a general modeling system to evaluate and optimize the location of an air polluting facility. OR Spektrum. \textbf{23}(1), 117--136 (2001)
	\bibitem{T_a2010} Tavana, M., Sodenkamp, M.A., Suhl, L.: A soft multi-criteria decision analysis model with
	application to the European Union enlargement. Ann. Oper. Res. \textbf{181}, 393--421  (2010)
	\bibitem{J_m2006} Jin, Y.C.: Multi-Objective Machine Learning. Springer-Verlag, Berlin (2006)
	
	\bibitem{FS2000} Fliege, J., Svaiter, B.F.: Steepest descent methods for multicriteria optimization. Math. Methods Oper. Res. \textbf{51}: 479--494 (2000)
	\bibitem{FDS2009} Fliege, J., Gra$\tilde{{\rm n}}$a Drummond, L.M., Svaiter, B.F.: Newton's method for multiobjective optimization. SIAM J. Optim. \textbf{20}(2): 602--626 (2009)
	\bibitem{WHW2019} Wang, J.H., Hu, Y.H., Wai Yu, C.K., Li, C., Yang, X.Q.: Extended Newton methods for multiobjective optimization: majorizing function technique and convergence analysis. SIAM J. Optim. \textbf{29}(3): 2388--2421 (2019)
	\bibitem{da_a2013} Da Cruz Neto, J.X., Da Silva, G.J.P., Ferreira. O.P., Lopes, J.O.: A subgradient method for multiobjective optimization. Comput. Optim. Appl. \textbf{54}(3): 461--472 (2013)
	\bibitem{QGC2011} Qu, S.J., Goh, M., Chan, F.T.S.: Quasi-Newton methods for solving multiobjective optimization. Oper. Res. Lett. \textbf{39}(5), 397--399 (2011)
	
	\bibitem{QGL2013} Qu, S.J., Goh, M., Liang, B.: Trust region methods for solving multiobjective optimisation. Optim. Methods Softw. \textbf{28}(4), 796--811 (2013)
	\bibitem{C_t2016} Carrizo, G.A., Lotito, P.A., Maciel, M.C.: Trust region globalization strategy for the nonconvex unconstrained multiobjective optimization problem. Math. Program. \textbf{159}, 339--369 (2016)
	
	
	\bibitem{DS2005} Gra$\tilde{{\rm n}}$a Drummond, L.M., Svaiter, B.F.: A steepest descent method for vector optimization problems. J. Comput. Appl. Math. \textbf{175}, 395--414 (2005)
	\bibitem{DI2004} Gra$\tilde{{\rm n}}$a Drummond, L.M., Iusem, A.N.: A projected gradient method for vector optimization problems. Comput. Optim. Appl. \textbf{28}(1), 5--29 (2004)
	\bibitem{FD2013} Fukuda, E.H., Gra$\tilde{{\rm n}}$a Drummond, L.M.: Inexact projected gradient method for vector optimization. Comput. Optim. Appl. \textbf{54}(3), 473--493 (2013)
	
	\bibitem{A_e2004} Aliprantis, C.D., Florenzano, M., Martins da Rocha, V.F.: Tourky R, Equilibrium analysis in financial markets with countably many securities. J. Math. Econom. \textbf{40}: 683--699 (2004)
	\bibitem{A_g2004} Aliprantis, C.D., Florenzano, M., Tourky, R.: General equilibrium analysis in ordered topological vector spaces, J. Math. Econom. \textbf{40}(3--4), 247--269 (2004)
	
	\bibitem{G_o2006} Guti\'{e}rrez, C., Jim\'{e}nez, B., Novo, V.: On approximate solutions in vector optimization problems via scalarization. Comput. Optim. Appl. \textbf{35}(3), 305--324 (2006)
	\bibitem{A_v2018} Ansari, Q.H., K$\ddot{{\rm o}}$bis, E., Yao, J.C.: Vector Variational Inequalities and Vector Optimization. Cham: Springer International Publishing AG, (2018)
	
	\bibitem{V_a2011} Villacorta, K.D.V., Oliveira, P.R.: An interior proximal method in vector optimization. Eur. J. Oper. Res. \textbf{214}(3), 485--492 (2011)
	
	\bibitem{C_g2011} Chen, Z.: Generalized viscosity approximation methods in multiobjective optimization problems. Comput. Optim. Appl. \textbf{49}(1), 179--192 (2011)
	\bibitem{B_a2013} Bello Cruz, J.Y.: A subgradient method for vector optimization problems. SIAM J. Optim. \textbf{23}(4), 2169--2182 (2013)
	\bibitem{L_n2018} Lucambio P\'{e}rez L.R., Prudente L.F.: Nonlinear conjugate gradient methods for vector optimization. SIAM J. Optim. \textbf{28}(3), 2690--2720 (2018)
	\bibitem{GP2020} Gonçalves, M.L.N., Prudente, L.F.: On the extension of the Hager--Zhang conjugate gradient method for vector optimization. Comput. Optim. Appl. \textbf{76}(3), 889--916 (2020)
	


	\bibitem{FD2014} Fukuda, E.H., Gra$\tilde{{\rm n}}$a Drummond, L.M.: A survey on multiobjective descent methods. Pesquisa Oper. \textbf{34}(3), 585--620 (2014)
	
	\bibitem{DRS2014} Gra$\tilde{{\rm n}}$a Drummond, L.M., Raupp. F.M.P., Svaiter, B.F.: A quadratically convergent Newton method for vector optimization. Optimization. \textbf{63}(5), 661--677 (2014)
	\bibitem{LC2014} Lu, F., Chen, C.R.: Newton-like methods for solving vector optimization problems. Appl. Anal. \textbf{93}(8), 1567--1586 (2014)
	
	\bibitem{C_a2009} Chen, Z., Zhao, K.Q.: A proximal-type method for convex vector optimization problem in Banach spaces. Numer. Func. Anal. Optim. \textbf{30}(1--2), 70--81 (2009)
	\bibitem{C_a2009a} Chen, Z., Huang, H.Q., Zhao K.Q.: Approximate generalized proximal-type method for convex vector optimization problem in Banach spaces. Comput. Math. Appl. \textbf{57}(7), 1196--1203 (2009)
	\bibitem{CY2012} Chuong, T.D., Yao J.C.: Steepest descent methods for critical points in vector optimization problems. Appl. Anal. \textbf{91}(10): 1811--1829 (2012)
	\bibitem{BIS2005} Bonnel, H., Iusem A.N., Svaiter B.F.: Proximal methods in vector optimization. SIAM J. Optim. \textbf{15}(4), 953--970 (2005)
	\bibitem{C2010} Chuong, T.D.: Tikhonov-type regularization method for efficient solutions in vector optimization. J. Comput. Appl. Math. \textbf{234}(3), 761--766 (2010)
	\bibitem{C2013} Chuong, T.D.: Newton-like for efficient solutions in vector optimization. Comput. Optim. Appl. \textbf{54}, 495--516 (2013)
	\bibitem{BG2018} Bo\c{t}, R.I., Grad, S-M.: Inertial forward backward methods for solving vector optimization problems. Optimization. \textbf{67}(7), 959--974 (2018)
	
	\bibitem{G_a1986} Grippo, L., Lampariello, F., Lucidi, S.: A nonmonotone line search technique for Newton's method. SIAM J.  Numer. Anal. \textbf{23}(4), 707--716 (1986)
	
	\bibitem{Z_a2004} Zhang, H., Hager, W.W.: A nonmonotone line search technique and its application to unconstrained optimization. SIAM J. Optim. \textbf{14}(4), 1043--1056 (2004)
	\bibitem{G_i2008} Gu, N.Z., Mo, J.T.: Incorporating nonmonotone strategies into the trust region method for unconstrained optimization. Comput. Math. Appl. \textbf{55}(9), 2158--2172 (2008)
	\bibitem{A_o2017} Ahookhosh, M., Ghaderi, S.: On efficiency of nonmonotone Armijo-type line searches. Appl. Math. Model. \textbf{43}, 170--190 (2017)
	
	
	\bibitem{M_n2019} Mita, K., Fukuda, E.H., Yamashita, N.: Nonmonotone line searches for unconstrained multiobjective optimization problems. J. Global Optim. \textbf{75}: 63--90 (2019)
	\bibitem{M_a2020} Mahdavi-Amiri, N., Salehi Sadaghiani, F.: A superlinearly convergent nonmonotone quasi-Newton method for unconstrained multiobjective optimization. Optim. Methods Softw. \textbf{35}(6), 1223--1247 (2020)
	\bibitem{Z_l2021} Zhao, X.P., Yao, J.C.: Linear convergence of a nonmonotone projected gradient method for multiobjective optimization. J. Global Optim. \textbf{82}: 577--594 (2021)
	\bibitem{R_n2022} Ramirez, V.A., Sottosanto, G.N.: Nonmonotone trust region algorithm for solving the unconstrained multiobjective optimization problems. Comput. Optim. Appl. \textbf{81}: 769--788 (2022)
	\bibitem{Q_n2017} Qu, S.J., Ji, Y., Jiang, J.L.,  Zhang, Q.P.: Nonmonotone gradient methods for vector optimization with a portfolio optimization application. Eur. J. Oper. Res. \textbf{263}(2), 356--366 (2017)
	\bibitem{AFP2021} Assun\c{c}\~{a}o, P.B., Ferreira, O.P., Prudente, L.F.: Conditional gradient method for multiobjective optimization. Comput. Optim. Appl. \textbf{78}(3): 741--768 (2021)
	
	\bibitem{B2017} Bech, A.: First-Order Method in Optimization. MPS-SIAM Series on Optimization, Philadelphia (2017)
	
	\bibitem{JBH1979} Hiriart-Urruty, J.B.: Tangent cone, generalized gradients and mathematical programming in Banach spaces. Mathe. Oper. Res. \textbf{4}, 79--97 (1979)
	\bibitem{AZ2003} Zaffaroni, A.: Degrees of efficiency and degrees of minimality. SIAM J. Control. Optim. \textbf{42}, 1071--1086 (2003)
	\bibitem{BBT2017} Bauschke, H.H., Bolte, J., Teboulle, M.: A descent lemma beyond Lipschitz gradient continuity: first-order methods revisited and applications. Math. Oper. Res. \textbf{42}(2), 330--348 (2017)
	\bibitem{B_f2018} Bolte, J., Sabach, S., Teboulle, M., Vaisbourd, Y.: First order methods beyond convexity and Lipschitz gradient continuity with applications to quadratic inverse problems. SIAM J. Optim. \textbf{28}(3): 2131--2151 (2018)
	
	
	\bibitem{MMR2005} Miglierina, E., Molho, E., Rocca, M.: Well-posedness and scalarization in vector optimization. J. Optim. Theory Appl. \textbf{126}(2), 391--409 (2005)
	\bibitem{L_m2009} Liu, C.G., Ng, K.F., Yang, W.H.: Merit functions in vector optimization. Math. Program., Ser. A. \textbf{119}, 215--237 (2009)
	\bibitem{A_c2019} Ansari, Q.H., K${\rm \ddot{o}}$bis, E., Sharma, P.K.; Characterizations of multiobjective robustness via oriented distance function and image space analysis. J. Optim. Theory Appl. \textbf{181}(3), 817--839 (2019)
	\bibitem{GHY2012} Gao, Y., Hou, S.H., Yang, X.M.: Existence and optimality conditions for approximate solutions to vector optimization problems. J. Optim. Theory Appl. \textbf{152}(1), 97--120 (2012)
	\bibitem{ZCY2019} Zhou, Z.A., Chen, W., Yang, X.M.: Scalarizations and optimality of constrained set-valued optimization using improvement sets and image space analysis. J. Optim. Theory Appl. \textbf{183}, 944--962 (2019)
	\bibitem{Z_a2005} Zhao, H.: A fast sweeping method for Eikonal equations. Math. Comput. \textbf{74}, 603--627 (2005)
	
	\bibitem{D_n1996}  Dennis, J.E., Schnabel, R.B.: Numerical Methods for Unconstrained Optimization and Nonlinear
	Equations. Society for Industrial and Applied Mathematics, Philadelphia (1996)
	
	
	
	\bibitem{QJJZ2017} Qu, S.J., Ji, Y., Jiang, J.L., Zhang, Q.P.: Nonmonotone gradient methods for vector optimization with a portfolio optimization application. Eur. J. Oper. Res. \textbf{263}(2), 356--366 (2017)
	
	\bibitem{H_c1993} Hiriart-Urruty, J.B., Lemar\'{e}chal, L.: Convex Analysis and Minimization Algorithms II, Springer-Verlag, Berlin (1993)
	
	
\end{thebibliography}
\end{document}